\newtheorem{thm}{Theorem}[section]
\newtheorem{cor}[thm]{Corollary}
\newtheorem{lem}[thm]{Lemma}
\newtheorem{prop}[thm]{Proposition}
\theoremstyle{definition}
\theoremstyle{remark}
\numberwithin{equation}{section}
\begin{document}
\setcounter{page}{1}

\title[Volume inequalities]{Volume inequalities for asymmetric Orlicz zonotopes}

\author[F. Chen, C. Yang,  M. Luo]{Fangwei Chen$^1$, Congli Yang$^2$,  Miao Luo$^{2}$,}

\address{1. Department of Mathematics and Statistics, Guizhou University of Finance and Economics,
Guiyang, Guizhou 550004, People's Republic of China}

\email{cfw-yy@126.com; chen.fangwei@yahoo.com}
\address{2, 3. School of Mathematics and Computer Science, Guizhou Normal
University, Guiyang, Guizhou 550001, People's Republic of China.}
\email{yangcongli@gznu.edu.cn}
\thanks{The  work is supported in part by CNSF (Grant No. 11161007, Grant No. 11101099), Guizhou (Unite) Foundation for Science and Technology (Grant
No. [2014] 2044, No. [2012] 2273, No. [2011] 16), Guizhou Technology Foundation for Selected Overseas Chinese Scholar and Doctor foundation of Guizhou Normal University.}


\subjclass[2010]{52A20, 52A40, 52A38.}

\keywords{Orlicz Minkowski sum, asymmetric Orlicz zonotopes, Shadow system, volume product, volume ratio}


\begin{abstract}
In this paper, we deal with the asymmetric  Orlicz zonotopes by using the method of shadow system. We establish the volume product inequality and volume ratio inequality for asymmetric Orlicz zonotopes, along with their equality cases.
\end{abstract} \maketitle

\section{introduction}
A classical problem in convex geometry is to find the maximizer or minimizer of the volume product among convex bodies. The celebrated Blaschke-Santal\'{o} inequality characterizes ellipsoids are the maximizers of this function on convex bodies. However, finding the minimizer of this function is a trouble in convex geometry. Only in two dimensional case, this problem is solved by Mahler (see, e.g., \cite{mah-ein-min1939,mah-ein-ube1939}). Moreover, it is conjectured by him that simplices are the solution of this function for all dimensional $n$, which is called the Mahler's conjecture. Although it is extremely difficult to attack, but it attracts lots of author's interests, many substantial inroads have been made. One can refer to e.g., \cite{bar-fra-the2013,bou-mil-new1987,cam-gro-on2006,cam-gro-vol2006,fra-mey-zva-an2012,gor-mey-rei-zon1988,hug-sch-rev2011,
kim-rei-loc2011,kup-fro2008,mey-rei-sha2006,rei-zon1986,rog-she-som1958} for more about this conjecture.

One aspect of the researches for the Mahler's conjecture is to make studying the volume product of zonotopes or zonoids, that is the Minkowski sums of origin-symmetric line segments in $\mathbb R^n$, and their limits with respect to the Hausdorff distance (see, e.g., \cite{gor-mey-rei-zon1988,rei-zon1986,sch-wei-zon1983}). Although the restriction to zonotopes and zonoids is a regrettable drawback, but there seems no approach for general convex bodies for this problem. On the other hand, inequalities for zonoids can be applied to stochastic geometry (see \cite{sch-wei-sto2008}).

In the last century, the  volume product inequalities in Euclidean space, $\mathbb R^n$, are widely been generalized with the development of the $L_p$-Minkowski theory. See, for example, \cite{lut-the1993,lut-the1996,lut-yan-zha-lp2005,lut-yan-zha-vol2004,lut-yan-zha-vol2007,lut-yan-zha-lp2000,
wer-ye-new2008,sta-the2002,sta-on2003,
cam-gro-on2002,cam-gro-the2002,cam-gro-on2006,fir-p1962}
for more details about the volume product inequalities with $L_p$-Minkowski theory.
The $L_p$-volume product inequalities for zonotopes, together with its dual volume ratio inequality, were established by Campi and Cronchi \cite{cam-gro-vol2006}. These results extend the results of Reisner \cite{rei-zon1986}. However, all of these results are restricted to the origin-symmetric setting. The asymmetric extension of the $L_p$-volume product inequality and $L_p$-volume ratio inequality, along with the characterization of its extremals are established by Weberndorfer in \cite{web-sha2013}, and the Campi and Cronchi's results as a special case. The seminal work in studying the asymmetric geometric inequalities are very important in convex geometry. For example, in the paper of  Ludwig \cite{lud-min2005}, she's characterization of the asymmetric $L_p$-centroid body and asymmetric $L_p$-projection body, which establishes the classification of the $SL(n)$ invariant Minkowski valuation on convex set. After that, the asymmetric geometric inequalities involving the volume and other geometric invariant are emerged. For instance, the asymmetric $L_p$-centroid body operator turned out to be an extension of $L_p$ version of the Blaschke-Santal\'{o} inequality for all convex bodies, whereas established by Lutwak and Zhang \cite{lut-zha-bla1997} for origin-symmetric setting. One can refer to  \cite{hab-sch-gen2009,hab-sch-asy2009,hab-sch-xia-an2012,sch-web-vol2012} for more details.

Beginning with the articles \cite{hab-lut-yan-zha-the2010,lut-yan-zha-orl-cen2010,lut-yan-zha-orl-pro2010} of Haberl, Lutwak, Yang and Zhang, a more wide extension of the $L_p$-Brunn-Minkowski theory emerged, called the Orlicz Brunn-Minkowski theory. In these papers, the Orlicz Busemann projection inequality and Orlicz Busemann centroid inequality were established. Recently, in a paper of Gardner, Hug and Weil \cite{gar-hug-wei-the2014}, a systematic studies are made on the Orlicz Minkowski addition, the Orlicz Brunn-Minkowski inequality and Orlicz Minkowski inequality are obtained. See, e.g., \cite {bor-str2013,bor-lut-yan-zha-the2012,bor-lut-yan-zha-the2013,che-zho-yan-on2011,che-yan-zho-the2014,gar-hug-wei-the2014,
hab-lut-yan-zha-the2010,hua-he-on2012,zhu-zho-xu-dua2014,zou-xio-orl2014} about the Orlicz Brunn-Minkowski theory.

In view of the importance of the volume product inequality in convex geometry, we tempted to consider the naturally posed problem in the wide interest of the Orlicz Brunn-Minkowski theory.  What is like the volume product inequality or volume ratio inequality for asymmetric Orlicz zonotopes? In this context, the main goal of this paper is to establish the volume product inequality and  volume ratio inequality for asymmetric Orlicz zonotopes.

Throughout this paper, let $ \mathcal C$ be the class of convex, strictly increasing functions $\varphi:[0,\infty)\rightarrow [0,\infty)$ satisfying $\varphi(0)=0$ and $\varphi(1)=1$.

Suppose that $\Lambda$ is a finite set of vectors from $\mathbb R^n\setminus \{o\}$, the asymmetric Orlicz zonotope $Z^+_\varphi\Lambda$ is the unique compact convex set with support function
\begin{align*}
  h_{Z^+_\varphi\Lambda}(u)=\inf\left\{\lambda>0:\sum_{w\in\Lambda}\varphi\left(\frac{\langle w, u\rangle_+}{\lambda}\right)\leq 1\right\}.
\end{align*}
Where $u\in \mathbb R^n$ and $\langle w,u\rangle_+=\max\{0, \langle w,u\rangle\}$ denotes the positive part of the Euclidean scalar product.

Specially, if take $\varphi(t)=t^p$, $p\geq 1$, then $Z^+_\varphi\Lambda$ is precisely the $L_p$-asymmetric zonotope $Z^+_p\Lambda$ defined in \cite{web-sha2013}.

In this paper, our main results are the volume product inequality and volume ratio inequality for the asymmetric Orlicz zonotopes.

Let $\Lambda_\bot=\{e_1,\cdots,e_n\}$ denote the canonical basis of $\mathbb R^n$, $Z^{+,*}_\varphi\Lambda$ denotes the polar body of $Z^{+}_\varphi\Lambda$ with respect to the Santal\'{o} point. For the asymmetric Orlicz zonotopes, we establish the following volume product inequality.
\begin{thm}
  Suppose $\varphi\in \mathcal C$ and $\Lambda$ is a finite and spanning multiset. Then
  \begin{align}
    V(Z^{+,*}_\varphi\Lambda)V(Z^+_1\Lambda)\geq V(Z^{+,*}_\varphi\Lambda_\bot)V(Z^+_1\Lambda_\bot).
  \end{align}
Equality holds with $\varphi\neq Id$ if and only if $\Lambda$ is a $GL(n)$ image of the canonical basis $\Lambda_\bot$. If $\varphi=Id$, the identity function, the equality holds if and only if $Z^+_1\Lambda$ is a parallelepiped.
\end{thm}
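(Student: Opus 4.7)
The plan is to prove the inequality by the method of shadow systems, following the asymmetric $L_p$ strategy of Weberndorfer~\cite{web-sha2013} and extending it to the Orlicz nonlinearity $\varphi\in\mathcal C$. As a first step I would record the explicit representation of the origin-polar: unfolding the infimum in the definition of $h_{Z^{+}_\varphi\Lambda}$ and using $K^\circ=\{u:h_K(u)\le1\}$ gives
\begin{equation*}
(Z^{+}_\varphi\Lambda)^\circ=\Bigl\{u\in\mathbb R^n:\sum_{w\in\Lambda}\varphi(\langle w,u\rangle_+)\le 1\Bigr\},
\end{equation*}
so that $Z^{+,*}_\varphi\Lambda$ differs from this set only by translation to the Santal\'o point $s$ of $Z^{+}_\varphi\Lambda$. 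Next, fix a single generator $w_k\in\Lambda$ and a direction $v\in\mathbb R^n$ and deform through $\Lambda_t=(\Lambda\setminus\{w_k\})\cup\{w_k+tv\}$. The Minkowski-sum body $Z^+_1\Lambda_t=\sum_w[0,w]$ is a classical shadow system of convex bodies along $v$ and, because $\varphi$ is convex, the lift $\{(u,t):\sum_{w\in\Lambda_t}\varphi(\langle w,u\rangle_+)\le 1\}\subset\mathbb R^{n+1}$ is a convex body whose horizontal slices at height $t$ return precisely $(Z^{+}_\varphi\Lambda_t)^\circ$.

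The analytical core of the argument is to prove the appropriate monotonicity of
\begin{equation*}
F(t):=V\bigl(Z^{+,*}_\varphi\Lambda_t\bigr)\,V\bigl(Z^+_1\Lambda_t\bigr)
\end{equation*}
along each such shadow family, namely that $F$ attains its minimum at one of the boundary values of~$t$. I would combine Brunn's concavity principle applied to the lifted convex body (yielding concavity of the polar factor to the power $1/n$) with a Meyer-Reisner variational argument controlling the motion of the Santal\'o point, together with the piecewise linearity of $V(Z^+_1\Lambda_t)$ arising from the mixed-volume expansion $V(\sum[0,w_i])=\sum_{|I|=n}|\det(w_i)_{i\in I}|$, to show that $F$ is concave in $t$ (or at least takes its minimum at an endpoint). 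Given this, choose the shadow direction $v$ successively: first to rotate $w_k$ onto a coordinate axis, then to rescale it, and finally to merge generators that have become collinear. Each such step produces a simpler configuration at which $F$ is no greater than before, and iterating across all generators transforms $\Lambda$ into a $GL(n)$-image of $\Lambda_\bot$, at which the claimed lower bound is attained.

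The equality analysis tracks the slack in the monotonicity step. When $\varphi\neq\operatorname{Id}$, the nonlinearity of $\varphi$ makes $F$ strictly concave on every shadow segment along which a nontrivial deformation is performed, so overall equality forces every reduction step to be trivial, and $\Lambda$ must already lie in $GL(n)\cdot\Lambda_\bot$. When $\varphi=\operatorname{Id}$, the polar factor collapses to the $L_1$ polar and $F$ becomes affine along each shadow segment, so the equality class broadens to the class of $\Lambda$ for which $Z^+_1\Lambda$ is a parallelepiped. The principal obstacle I expect is the joint monotonicity statement for $F$ itself: the two factors individually have somewhat opposite convexity behaviour along shadow systems, and extracting the correct joint estimate while simultaneously tracking the motion of the Santal\'o point for a general convex $\varphi$ is not covered by the existing $L_p$ arguments and will require a careful variational treatment adapted to the Orlicz setting.
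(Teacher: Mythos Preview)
Your shadow-system framework matches the paper's, but the ``analytical core'' you flag as the principal obstacle is a real gap, and the paper closes it by two choices you do not make.

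First, the paper does not deform a single generator. It uses the Campi--Gronchi orthogonalization, which moves \emph{all} vectors of $\Lambda$ along the common direction $v_1/\|v_1\|$ with speeds tuned precisely so that $V(Z^+_1\Lambda_t)$ is \emph{constant} in $t$. Under your single-vector move, $V(Z^+_1\Lambda_t)=\sum_{|I|=n}|\det(w_i)_{i\in I}|$ is piecewise linear and in fact \emph{convex} (absolute values of affine functions), which combines in the wrong direction with any concavity of the polar factor. With the volume-preserving deformation the $Z^+_1$ factor drops out of the problem entirely.

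Second, the paper never attempts to show that $F$ is concave; it shows that $1/F$ is convex. Once $V(Z^+_1\Lambda_t)$ is constant, this reduces to the Campi--Gronchi/Meyer--Reisner theorem that $t\mapsto V(K_t^*)^{-1}$ is convex for any shadow system $K_t$ of convex bodies, a result that already incorporates the moving Santal\'o point. Hence the only Orlicz-specific work is to verify that $Z^+_\varphi\Lambda_t$ is itself a shadow system of convex bodies whenever $\Lambda_t$ is a shadow system of multisets; the paper does this via the upper/lowergraph characterization of shadow systems, using only convexity and monotonicity of $\varphi$. By contrast, your proposed Brunn lift $\{(u,t):\sum_{w\in\Lambda_t}\varphi(\langle w,u\rangle_+)\le1\}$ is not obviously convex in $\mathbb R^{n+1}$, since $\langle w_k+tv,u\rangle$ is bilinear in $(t,u)$; so even the concavity of the origin-polar volume is not secured by that route.

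With convexity of $1/F$ along each orthogonalization, the paper applies Weberndorfer's reduction lemma to reach $\Lambda_\bot\uplus\Lambda_{e_1}$ with $\Lambda_{e_1}$ a multiset of multiples of $e_1$. Positive multiples are absorbed by the strict comparison of $\Lambda$ with $\overline\Lambda$ (strict exactly when $\varphi\neq\mathrm{Id}$), and a residual $\{-\mu e_1\}$ is handled by a separate strict-inequality lemma, whose proof uses the Meyer--Reisner rigidity statement for shadow systems with constant $V(K_t^*)$. Your equality heuristic is in the right spirit, but it is these two lemmas, not a blanket ``strict concavity of $F$'', that pin down the equality cases.
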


We follow the notations of paper \cite{web-sha2013}. A set $\Lambda$ of vectors from $\mathbb R^n$ is called {\it obtuse} if every pair of distinct vectors $u,\,\, v$ from $\Lambda$ satisfies
\begin{align*}
  \langle u,v\rangle_+=0.
\end{align*}

Another result regards to the volume ratio for asymmetric Orlicz zonotopes associate with the obtuse sets $\Lambda$ says that it attains its maximum if $\Lambda$ is a canonical basis of $\mathbb R^n$.
\begin{thm}
 Suppose $\varphi\in\mathcal C$ and $\Lambda$ is a finite and spanning set. Then
  \begin{align*}
    \frac{V(Z^+_\varphi\Lambda)}{V(Z^+_1\Lambda)}\leq\frac{V(Z^+_\varphi \Lambda_\bot)}{V(Z^+_1\Lambda_\bot)}.
  \end{align*}
With equality if and only if $\Lambda$ is a $GL(n)$ image of an obtuse set.
\end{thm}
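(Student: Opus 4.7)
The plan is to adapt the shadow system technique of Campi--Gronchi and Weberndorfer to the Orlicz setting. A direct support function check gives $Z^+_\varphi(T\Lambda)=T\cdot Z^+_\varphi\Lambda$ for every $T\in GL(n)$, so the ratio $R(\Lambda):=V(Z^+_\varphi\Lambda)/V(Z^+_1\Lambda)$ is a $GL(n)$-invariant of $\Lambda$, and it suffices to show that $R$ is maximized over spanning configurations by obtuse sets.

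For $\Lambda=\{w_1,\ldots,w_N\}$, a fixed direction $v\in\mathbb R^n$, and coefficients $\beta_1,\ldots,\beta_N\in\mathbb R$, consider the one-parameter family $\Lambda_t=\{w_i+t\beta_i v\}_{i=1}^N$ with $t\in[a,b]$. The first task is to verify that $(Z^+_\varphi\Lambda_t)_{t\in[a,b]}$ is a shadow system of convex bodies. Lifting to $\mathbb R^{n+1}$ via $\widetilde\Lambda=\{(w_i,\beta_i)\}$ and setting $\widetilde K=Z^+_\varphi\widetilde\Lambda$, a support-function computation shows that the linear map $(x,\alpha)\mapsto x+t\alpha v$ from $\mathbb R^{n+1}$ to $\mathbb R^n$ sends $\widetilde K$ to $Z^+_\varphi\Lambda_t$ for every $t$. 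Shephard's theorem then yields that $t\mapsto V(Z^+_\varphi\Lambda_t)$ is convex on $[a,b]$, uniformly in $\varphi\in\mathcal C$.

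Meanwhile, $Z^+_1\Lambda=\sum_{w\in\Lambda}[o,w]$ is an ordinary zonotope with volume $V(Z^+_1\Lambda)=\sum_{1\le i_1<\cdots<i_n\le N}|\det(w_{i_1},\ldots,w_{i_n})|$, and by multilinearity of the determinant each summand is the absolute value of an affine function of $t$. By the Campi--Gronchi choice of $v$ and $\beta_i$ (no $n$-subset determinant changes sign on $[a,b]$, plus a balancing condition equalizing the two endpoint values), $t\mapsto V(Z^+_1\Lambda_t)$ becomes constant on $[a,b]$, so $R(\Lambda_t)$ is convex in $t$ and attains its maximum at an endpoint.

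Iterating this perturbation in the spirit of Weberndorfer's reduction scheme, one argues that the maximum of $R$ over spanning configurations is attained at configurations where no non-trivial shadow system further increases $R$; these extremal configurations turn out to be obtuse sets, and for them a direct computation using the ``separated'' structure of $h_{Z^+_\varphi\Lambda^*}$ on such sets yields $R(\Lambda^*)=R(\Lambda_\bot)$. For the equality case, the Shephard equality criterion at each step forces the triviality of the shadow system, which propagates back to show that $\Lambda$ itself is a $GL(n)$ image of an obtuse set. The principal obstacle I anticipate is the shadow system convexity of $V(Z^+_\varphi\Lambda_t)$ in the generality of $\varphi\in\mathcal C$: the explicit integral representations available in the $L_p$ case of Weberndorfer are lost, so the abstract lifting argument must be carried through carefully, together with a delicate tracking of the equality conditions through the iteration.
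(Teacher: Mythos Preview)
Your overall architecture matches the paper: establish that $Z^+_\varphi\Lambda_t$ is a shadow system so that $t\mapsto V(Z^+_\varphi\Lambda_t)$ is convex, keep $V(Z^+_1\Lambda_t)$ constant via the Campi--Gronchi orthogonalization, and then feed this into Weberndorfer's reduction lemma. Your lifting argument to $\widetilde K=Z^+_\varphi\widetilde\Lambda\subset\mathbb R^{n+1}$ is correct (the support-function identity $h_{P_t\widetilde K}(u)=h_{Z^+_\varphi\Lambda_t}(u)$ checks out) and is in fact cleaner than what the paper does: the paper instead verifies the Campi--Gronchi uppergraph/lowergraph characterization of shadow systems directly, which costs several pages of Orlicz-norm estimates. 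So on the ``principal obstacle'' you flag, you have actually found a shorter path than the authors.

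Where your proposal is thin, and genuinely diverges from the paper, is in two places. First, the claim that obtuse $\Lambda$ satisfy $R(\Lambda)=R(\Lambda_\bot)$ is not a one-line ``separated structure'' computation: the paper proves a dissection formula
\[
Z^+_\varphi\Lambda=\bigcup_{1\le i_1<\cdots<i_n\le m}Z^+_\varphi\{w_{i_1},\dots,w_{i_n}\}
\]
with pairwise volume-zero overlaps, and this requires a careful argument that the pieces coming from the extra (non-basis) vectors of an obtuse set do not contribute new volume. Second, your equality mechanism is not the one that works here. The paper does \emph{not} invoke a Shephard-type equality criterion for the convexity of $V(Z^+_\varphi\Lambda_t)$; instead it uses two ingredients you do not mention: (a) Weberndorfer's reduction lemma in its sharp form, which says that if $\Lambda$ is not a $GL(n)$ image of an obtuse set and equality holds, then the residual multiset $\Lambda_{e_1}$ must contain a \emph{positive} multiple of $e_1$; and (b) a separate lemma showing that merging parallel vectors (passing from $\Lambda$ to $\overline\Lambda$) strictly increases $R$ when $\varphi\ne\mathrm{Id}$. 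These two combine to a contradiction. Tracking ``triviality of the shadow system'' through the iteration does not obviously deliver this, because at each step one endpoint of the orthogonalization genuinely changes $\Lambda$ (it kills a vector or makes it orthogonal), so equality in the convexity step does not by itself say $\Lambda$ was already obtuse.
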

The paper is organized as follows. In section 2, we introduce the asymmetric Orlicz zonotopes  and show some of their properties. The shadow system and some results of them are given in Section 3. Section 4 deals with the equality case of the volume product inequality and volume ratio inequality for Orlicz zonotopes. The final proofs of the main theorems  are presented in section 5.

\section{preliminaries}
For quick reference we recall some basic definition and notations in convex geometry that is required for our results. Good references see Gardner \cite{gar-geo2006}, Gruber \cite{gru-con2007}, Schneider \cite{sch-con1993}.

Let $\mathcal K^n$ denote the set of convex bodies (compact, convex subsets with nonempty interiors) in Euclidean $n$-space, $\mathbb R^n$. If $K$ is a convex body, denote by $V(K)$ its $n$-dimensional volume, and by $h_K(\cdot):S^{n-1}\rightarrow \mathbb R$
 the support function of $K$; i.e., for
$u\in \mathbb S^{n-1}$,
$$h_K(u)=max\big\{\langle u, x\rangle :x\in K\big\},$$
 where $\langle u, x\rangle$ denotes the standard inner product in $\mathbb R^n$. It is shown that the sublinear support function characterizes a convex body and, conversely, every sublinear function on $\mathbb R^n$ is the support function of a nonempty compact convex set.

 Two convex body $K,\,\,L$ satisfy $K\subseteq L$ if and only if $h_K(\cdot)\leq h_L(\cdot)$. By the definition of the support function, it follows immediately that the support function of the image $\phi K:=\{\phi y:y\in K\}$ is given by
\begin{align*}
h_{\phi K}(x)=h_K(\phi ^Tx)
\end{align*}
for $\phi\in GL(n)$. Here $\phi^T$ denotes the transpose of $\phi$.

Let $K$ be a convex body, for every interior point $s$ of $K$,
\begin{align*}
  K^s=\{y\in \mathbb R^n:\langle y, x-s\rangle\leq 1 \,\,for \,\,\,all\,\, x\in K\}
\end{align*}
defines a convex body that is called the polar body of $K$ with respect to $s$. A well-known result of Santal\'{o}  states that, in every convex body $K\in \mathbb R^n$, there exists a unique point $s(K)\in K$, the Santal\'o point, such that
\begin{align*}
  V(K^{s(K)})=\min_{s\in K}V(K^s).
\end{align*}
To shorten the notation, we shall denote $K^{s(K)}$ by $K^*$. It is well known that the polarization with respect to the Santal\'o point is translation invariant and $GL(n)$ contravariant, that is,
\begin{align*}
  (K+y)^*=K^*\,\,\,\,\,\,\,\,\,and \,\,\,\,\,\,\,\,\,\,(\phi K)^*=\phi^{-T}K^*,
\end{align*}
for $y\in \mathbb R^n$ and $\phi\in GL(n)$.

Suppose $\Lambda$ is a set in $\mathbb R^n$, it is called multiset if its members are allowed to appear more than once. More precisely, a multiset $\Lambda$ is identified with its multiplicity function $1_{\Lambda}:\mathbb R^n\rightarrow N\cup\{0\}$,  that generalizes the characteristic function of sets. We say that a vector is an element of a multiset if the corresponding multiplicity function evaluated at the vector is greater than zero, and call a multiset finite if it contains only a finite number of vectors. If these vectors span $\mathbb R^n$, then we say that the multiset is spanning.

The operation between multisets can be defined using the multiset function. For instance, the union $\Lambda_1\uplus\Lambda_2$ of $\Lambda_1$ and $\Lambda_2$ is defined as
\begin{align*}
  1_{\Lambda_1\uplus \Lambda_2}(x)=1_{\Lambda_1}(x)+1_{\Lambda_2}(x),
\end{align*}
and $\Lambda_1-\Lambda_2$ is defined as
\begin{align*}
  1_{\Lambda_1- \Lambda_2}(x)=\max\{0, 1_{\Lambda_1}(x)-1_{\Lambda_2}(x)\}.
\end{align*}
We write multisets in usual set notation, that is, $\Lambda=\{v_1,\cdots,v_m\}$.

The asymmetric $L_p$-zonotopes associated with finite and spanning multisets $\Lambda=\{v_1,\cdots,v_m\}$ are defined by Weberdorfer \cite{web-sha2013}. Here we extend the notations to asymmetric Orlicz zonotopes.

Let $ \mathcal C$ be the class of convex, strictly increasing functions $\varphi:[0,\infty)\rightarrow [0,\infty)$ satisfying $\varphi(0)=0$ and $\varphi(1)=1$.  Here the normalization is a matter of convenience and other choices are possible. It is not hard to conclude that $\varphi\in \mathcal C$ is continuous on $[0,\infty)$.

Asymmetric Orlicz zonotopes associated with finite and spanning multisets $\Lambda=\{v_1,\cdots,v_m\}$ are defined by
\begin{align}\label{asy-orl-zon}
  h_{Z^+_\varphi\Lambda}(u)=\inf\Bigg\{\lambda>0:\sum_{i=1}^m\varphi\bigg(\frac{\langle v_i,u\rangle_+}{\lambda}\bigg)\leq 1\Bigg\},
\end{align}
for all $u\in S^{n-1}$. Moreover, if $\langle v_i,u\rangle_+=0$ for all $i=1,\cdots,m$, we define $h_{Z^+_\varphi\Lambda}(u)=0$.

In fact, by the convexity of $\varphi$ and the sub-additive of $\langle v, \cdot\rangle_+$, we have
 \begin{align*}
   \varphi\bigg(\frac{\langle v,u_1+u_2\rangle_+}{\lambda_1+\lambda_2}\bigg)\leq\frac{\lambda_1}{\lambda_1+\lambda_2}\varphi\bigg(\frac{\langle v,u_1\rangle_+}{\lambda_1}\bigg)+\frac{\lambda_2}{\lambda_1+\lambda_2}\varphi\bigg(\frac{\langle v,u_2\rangle_+}{\lambda_2}\bigg),
 \end{align*}
 it follows that the support function defined in (\ref{asy-orl-zon}) is sublinear, which grants the existence of convex body $Z^+_\varphi\Lambda$.

Specially, if $p\geq 1$ and take $\varphi(t)=t^p$, then it turns out that $Z^+_\varphi\Lambda=Z^+_p\Lambda$.

Note that $\varphi\in\mathcal C$ is strictly convex and increasing on $[0,\infty)$, it follows that the function
\begin{align*}
  \lambda\mapsto \sum_{i=1}^m\varphi\bigg(\frac{\langle v_i,u\rangle_+}{\lambda}\bigg)
\end{align*}
is strictly decreasing on $[0,\infty)$. The next lemma easily follows.
\begin{lem}\label{orl-nor-lem}
Suppose $\varphi\in \mathcal C$ and $\Lambda=\{v_1,\cdots,v_m\}$ spans $\mathbb R^n$.  For $u_0\in S^{n-1}$, then

(\romannumeral 1),\,\,$\sum_{i=1}^m\varphi\Big(\frac{\langle v_i,u_0\rangle_+}{\lambda_0}\Big)=1$ if and only if $\lambda_0 =h_{Z^+_\varphi\Lambda}(u_0)$;

(\romannumeral 2),\,\,$\sum_{i=1}^m\varphi\Big(\frac{\langle v_i,u_0\rangle_+}{\lambda_0}\Big)>1$ if and only if $\lambda_0 <h_{Z^+_\varphi\Lambda}(u_0)$;

(\romannumeral 3),\,\,$\sum_{i=1}^m\varphi\Big(\frac{\langle v_i,u_0\rangle_+}{\lambda_0}\Big)<1$ if and only if $\lambda_0 >h_{Z^+_\varphi\Lambda}(u_0)$.
\end{lem}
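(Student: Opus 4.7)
The plan is to view the three claims as three simultaneous consequences of a single observation: the auxiliary function
\[
f(\lambda) \,:=\, \sum_{i=1}^m \varphi\biggl(\frac{\langle v_i, u_0\rangle_+}{\lambda}\biggr),\qquad \lambda>0,
\]
is continuous, strictly decreasing, and (generically) surjects onto $(0,\infty)$, and its unique solution of $f(\lambda)=1$ coincides with $h_{Z^+_\varphi\Lambda}(u_0)$. Once this is in hand, (i), (ii), (iii) become the three cases of comparing a value $f(\lambda_0)$ to $1$ via monotonicity.

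First I would verify the qualitative properties of $f$. Continuity on $(0,\infty)$ follows from the continuity of $\varphi$ on $[0,\infty)$, already recorded in the section's preamble. Strict monotonicity in $\lambda$ is precisely the statement displayed just above the lemma. For the limits, $\lim_{\lambda\to\infty} f(\lambda)=0$ uses only $\varphi(0)=0$ and continuity, while $\lim_{\lambda\to 0^+} f(\lambda)=+\infty$ uses the growth bound $\varphi(t)\ge t$ for $t\ge 1$, itself a standard consequence of convexity together with $\varphi(0)=0$ and $\varphi(1)=1$ (the ratio $\varphi(t)/t$ is nondecreasing, so $\varphi(t)/t\ge\varphi(1)=1$ for $t\ge 1$). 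The spanning hypothesis enters to guarantee that at least one summand is nonzero: as soon as some $v_j$ satisfies $\langle v_j,u_0\rangle>0$, that summand alone drives $f(\lambda)\to\infty$ as $\lambda\to 0^+$.

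Combining these properties, the intermediate value theorem produces a unique $\lambda^*>0$ with $f(\lambda^*)=1$, and monotonicity gives $\{\lambda>0:f(\lambda)\le 1\}=[\lambda^*,\infty)$. Comparing with the definition
\[
h_{Z^+_\varphi\Lambda}(u_0) \,=\, \inf\bigl\{\lambda>0:\, f(\lambda)\le 1\bigr\}
\]
identifies $h_{Z^+_\varphi\Lambda}(u_0) = \lambda^*$. Statement (i) is then the injectivity of $f$ ($f(\lambda_0)=1\Leftrightarrow\lambda_0=\lambda^*$), and (ii), (iii) record the two tails of strict decrease: $f(\lambda_0)>1 \Leftrightarrow \lambda_0<\lambda^*$ and $f(\lambda_0)<1 \Leftrightarrow \lambda_0>\lambda^*$.

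The only real subtlety, and hence the main obstacle I anticipate, is the degenerate direction in which $\langle v_i,u_0\rangle\le 0$ for every $i$, which is not automatically ruled out by the spanning hypothesis (consider e.g.\ $\Lambda=\{-e_1,-e_2\}$ with $u_0=(1,1)/\sqrt{2}$). In that case $f\equiv 0$, the surjectivity argument collapses, but the convention $h_{Z^+_\varphi\Lambda}(u_0)=0$ recorded in the definition makes each of (i)--(iii) a tautology about $\lambda_0>0$. I would dispose of this degenerate case in a single opening sentence and then carry out the argument above under the standing assumption that some $\langle v_j,u_0\rangle$ is strictly positive.
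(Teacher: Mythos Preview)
Your proposal is correct and follows the same approach as the paper, which does not actually supply a proof: immediately before the lemma the paper records that $\lambda\mapsto\sum_{i=1}^m\varphi(\langle v_i,u\rangle_+/\lambda)$ is strictly decreasing and then simply states ``The next lemma easily follows.'' Your write-up fleshes out precisely the details the paper omits---the boundary limits of $f$, the identification of the infimum with the unique zero via the intermediate value theorem, and the degenerate case $\langle v_i,u_0\rangle_+=0$ for all $i$---so there is no methodological difference, only a difference in level of detail.
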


A simple observe of definition (\ref{asy-orl-zon}) is that the operator $Z^+_\varphi$ on finite and spanning multisets is $GL(n)$ equivariant, that is,
$Z^+_\varphi \phi\Lambda=\phi Z^+_\varphi\Lambda$ holds for all $\phi\in GL(n)$. In fact,
\begin{align*}
 h_{Z^+_\varphi\phi\Lambda}(u)&=\inf\Bigg\{\lambda>0:\sum_{i=1}^m\varphi\bigg(\frac{\langle \phi v_i, u\rangle_+}{\lambda}\bigg)\leq 1\Bigg\}\\
 &=\inf\Bigg\{\lambda>0:\sum_{i=1}^m\varphi\bigg(\frac{\langle v_i,\phi^T u\rangle_+}{\lambda}\bigg)\leq 1\Bigg\}\\
 &=h_{Z^+_\varphi\Lambda}(\phi^Tu)=h_{\phi Z^+_\varphi\Lambda}(u),
\end{align*}
holds for all $u\in S^{n-1}$. Moreover, the asymmetric Orlicz zonotopes defined by (\ref{asy-orl-zon}) are closely related to the origin symmetric Orlicz zonotopes $Z_\varphi \Lambda$ defined in \cite{wan-len-hua-vol2012}.
Specially,
\begin{align*}
\varphi\bigg(\frac{\rvert\langle v_i, u\rangle\lvert}{\lambda}\bigg)&=\varphi\bigg(\frac{\langle v_i, u\rangle_++\langle -v_i, u\rangle_+}{\lambda}\bigg)\\
&=\varphi\bigg(\frac{\langle v_i, u\rangle_+}{\lambda}\bigg)+\varphi\bigg(\frac{\langle -v_i, u\rangle_+}{\lambda}\bigg).
\end{align*}
 Which implies that $Z_\varphi\Lambda=Z^+_\varphi(\Lambda\uplus-\Lambda)$.

In the following, the volume product and volume ratio for Orlicz zonotopes associate with the multisets $\Lambda$ always refer to $V(Z^{+,*}_\varphi\Lambda)V({Z^+_1\Lambda})$ and $\frac{V(Z^+_\varphi \Lambda)}{V(Z^+_1\Lambda)}$, respectively.

\section{shadow system of multiset}

The notation of shadow system, introduced by Rogers and Shephard (see \cite{rog-she-som1958,she-sha1964}), play an important role in proving geometric  inequalities in convex geometry. For example, this method was used by Campi, Gronchi, Meyer and Reisner (see, e.g., \cite{cam-col-gro-a1999,cam-gro-the2002,cam-gro-on2002,cam-gro-on2006,cam-gro-vol2006,mey-rei-sha2006}).
 A shadow system $X_t$ of points from $\mathbb R^n$ is a family of sets which can be defined as follows:
\begin{align*}
  X_t=\{x_i+t\beta_i v: x_i \in\mathbb R^n\}
\end{align*}
 where $t\in[t_1,t_2]$, $\beta_i\in \mathbb R$. Here $t$ can be seen as a time-like parameter and $\beta_i$ as the speed of the point $x_i$ along the direction $v$.

 Let $\Lambda=\{v_1,\cdots, v_m\}$ be a finite multiset such that $\Lambda\setminus v_1$ is spanning. Following the ideas of Campi and Gronchi \cite{cam-gro-vol2006}, define $\Lambda^a_t=\{w_1(t),\cdots,w_m(t)\}$, where
 \begin{align}\label{ort-sha-sys}
  w_i(t)=\left\{
      \begin{array}{ll}
        (1+t a) v_1, & \hbox{i=1;} \\
        v_i-t\frac{\langle v_1,v_i\rangle}{\lVert v_1\rVert^2}v_1, & \hbox{otherwise.}
      \end{array}
    \right.
 \end{align}
Where $t$ varies in $[-a^{-1},1]$, and
\begin{align}\label{sha-sys-con}
a=\frac{\sum\limits_{2\leq i_1<\cdots<i_n\leq m}\big|[v_{i_1},\cdots ,v_{i_n}]\big|}{\sum\limits_{2\leq i_2<\cdots<i_n\leq m}\big|[v_{1},v_{i_2}\cdots ,v_{n}]\big|},
 \end{align}
here $[v_{i_1},\cdots ,v_{i_n}]$ denotes the determinant of the matrix whose rows are $v_{i_1},\cdots ,v_{i_n}$. From the definition we have $\Lambda^a_0=\Lambda$, and $w_1(1)$ is orthogonal to the remaining vectors in $\Lambda_1^a$, while $w_1(-a^{-1})=o$. Moreover, by the construction (\ref{ort-sha-sys}), $\Lambda^a_t$, $t\in[-a^{-1},1]$, is a shadow system of multisets along the direction $v=\frac{v_1}{\lVert v_1\rVert}\in S^{n-1}$.

The important result of Campi and Gronchi is that for $t\in [-a^{-1},1]$, the asymmetric $L_1$-zonotopes associate $\Lambda^a_t$ preserve the volume, and then be extended to asymmetric $L_p$-zonotopes by Weberndorfer. From now on, we use $\Lambda_t$ to denote $\Lambda^a_t$, the orthogonalization of $\Lambda$ with respect to $v_1$ if $a$ is determined by (\ref{sha-sys-con}).

In the following, we will show that the asymmetric Orlicz zonotopes associate with the shadow system of a multiset along direction $v$ is independent of $t$.
\begin{lem}\label{pro-ind-lem}
Suppose that $\Lambda_t$, $t\in[-a^{-1}, 1]$ is a shadow system of multisets along the
direction $v\in S^{n-1}$. Then the orthogonal projection of $Z_{\varphi}^{+}\Lambda_t$ onto $v^\bot$ is independent of $t$.
\end{lem}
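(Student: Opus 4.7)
The plan is to compute the support function of the projection $(Z_\varphi^+ \Lambda_t)|v^\perp$ at an arbitrary direction $u\in v^\perp\cap S^{n-1}$ and show that it does not depend on $t$. Since the support function of a projection $K|v^\perp$ coincides with $h_K$ when restricted to directions in $v^\perp$, and since a convex body is determined by its support function, this will suffice.

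First I would fix $u\in v^\perp\cap S^{n-1}$ (so $\langle v_1,u\rangle=0$, recalling $v=v_1/\lVert v_1\rVert$) and compute $\langle w_i(t),u\rangle$ for each $i$ using the definition (\ref{ort-sha-sys}). For $i=1$, one has
\begin{align*}
\langle w_1(t),u\rangle = (1+ta)\langle v_1,u\rangle = 0 = \langle v_1,u\rangle,
\end{align*}
and for $i\geq 2$,
\begin{align*}
\langle w_i(t),u\rangle = \langle v_i,u\rangle - t\frac{\langle v_1,v_i\rangle}{\lVert v_1\rVert^2}\langle v_1,u\rangle = \langle v_i,u\rangle.
\end{align*}
Hence $\langle w_i(t),u\rangle_+ = \langle v_i,u\rangle_+$ for every $i$ and every $t\in[-a^{-1},1]$.

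Plugging this into the defining formula (\ref{asy-orl-zon}), the set
\begin{align*}
\Bigl\{\lambda>0:\sum_{i=1}^m \varphi\bigl(\langle w_i(t),u\rangle_+/\lambda\bigr)\le 1\Bigr\}
\end{align*}
coincides with the analogous set for $\Lambda=\Lambda_0$, so $h_{Z_\varphi^+\Lambda_t}(u)=h_{Z_\varphi^+\Lambda}(u)$ for all $u\in v^\perp$. Since the support function of $(Z_\varphi^+\Lambda_t)|v^\perp$ is the restriction of $h_{Z_\varphi^+\Lambda_t}$ to $v^\perp$, we conclude that the projection is independent of $t$.

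I do not foresee a serious obstacle: the argument is a direct consequence of the fact that a shadow system displaces points only along $v$, so any linear functional annihilating $v$ cannot detect the motion. The only point worth flagging is that the formulation in the lemma refers to $\Lambda_t=\Lambda^a_t$ with $a$ given by (\ref{sha-sys-con}); in fact the same proof works for any shadow system $w_i(t)=w_i(0)+t\beta_i v$, since then $\langle w_i(t),u\rangle=\langle w_i(0),u\rangle$ for $u\perp v$, which is ultimately the structural reason for the result.
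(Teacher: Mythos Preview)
Your proof is correct and follows essentially the same approach as the paper: both restrict the support function to directions $u\in v^{\bot}$ and observe that $\langle w_i(t),u\rangle=\langle v_i,u\rangle$ because the shadow motion occurs only along $v$, so $h_{Z_\varphi^+\Lambda_t}(u)$ is independent of $t$. The only minor difference is that the paper works directly with the general form $v_i+t\beta_i v$, whereas you first run through the specific orthogonalization (\ref{ort-sha-sys}) and then note the general case at the end; since the lemma is stated for an arbitrary shadow system, it would be cleaner to argue the general case from the start.
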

\begin{proof}
  By the definition of $\Lambda_t$, for $x\in{v^\bot}$ we have
  \begin{align*}
    h_{Z^+_\varphi \Lambda_t}(x)&=\inf\left\{\lambda>0: \sum^m_{i=1}\varphi\left(\frac{\langle v_i+t\beta_i v,x\rangle_+ }{\lambda}\right)\leq 1\right\}\\
    &=\inf\left\{\lambda>0: \sum^m_{i=1}\varphi\left(\frac{\langle v_i,x\rangle_+ }{\lambda}\right)\leq 1\right\}\\
    &=h_{Z^+_\varphi \Lambda}(x).
  \end{align*}
Which shows the result.
\end{proof}

Before characterizing the shadow system of convex bodies along with direction $v$ distinguish with others, we introduce the uppergraph function $\overline g_v(K, \cdot)$ and the lowergraph function $\underline g_v(K, \cdot)$ of a convex body $K$.
\begin{align*}\begin{split}
\overline g_v(K, x):=\sup\{\lambda\in \mathbb R: x+\lambda v\in K\};\\
\underline g_v(K, x):=\inf\{\lambda\in \mathbb R: x+\lambda v\in K\}.
\end{split}\end{align*}
An alternative representation of the above formulas are obtain by Weberndorfer \cite{web-sha2013}.
Let $w\in v^\bot$, then
\begin{align}\label{up-low-gra}
\begin{split}
 \overline g_v(K,x)=\inf_{w\in v^\bot}\{h_K(v+w)-\langle x, w\rangle\};\\
\underline g_v(K, x)=-\inf_{w\in v^\bot}\{h_K(-v-w)+\langle x, w\rangle\}.
\end{split}
\end{align}
for all $x\in v^\bot$.

Now we present the characterization of a shadow system obtain by Campi and Gronchi.
\begin{prop}\cite{cam-gro-on2002}\label{cam-gro-thm}
  Let $K_t$, $t\in[-a^{-1},1]$, be one parameter family of convex bodies such that $K_t|_{v^\bot}$ is independent of $t$. Then $K_t$, $t\in[-a^{-1},1]$, be a shadow system of convex bodies along the direction $v$ if and only if for every $x\in K_0|_{v^\bot}$, the functions $t\rightarrow \overline g_v(K_t,x)$ and $t\rightarrow -\underline{g}_v(K_t,x)$ are convex and
  \begin{align}\label{upp-low-ine}
    \underline g_v(K_{\lambda s+\mu t}, x)\leq \lambda\overline g_v(K_s,x)+\mu \underline g_v(K_t,x)\leq\overline g_v(K_{\lambda s+\mu t}, x)
  \end{align}
  for every $s, t\in[-a^{-1},1]$ and $\lambda,\mu\in (0,1)$ such that $\lambda+\mu=1$.
\end{prop}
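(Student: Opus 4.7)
The plan is to identify the shadow-system condition with a natural lift $\hat K \subset \mathbb{R}^n \times \mathbb{R}$ such that $K_t = P_t(\hat K)$ for $P_t(y, s) = y + tsv$; the graph functions are then read off the 2-dimensional slice $\hat K_x = \{(\xi, s) \in \mathbb{R}^2 : (x + \xi v, s) \in \hat K\}$ at each $x \in K_0|_{v^\bot}$ via
\begin{align*}
\overline g_v(K_t, x) = \sup_{(\xi, s) \in \hat K_x}(\xi + ts), \qquad \underline g_v(K_t, x) = \inf_{(\xi, s) \in \hat K_x}(\xi + ts).
\end{align*}

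\textbf{Forward direction.} Assume $K_t$ is a shadow system with lift $\hat K$. Each graph function is then a supremum or infimum of affine functions of $t$, so $t \mapsto \overline g_v(K_t, x)$ is convex and $t \mapsto -\underline g_v(K_t, x)$ is convex. For the sandwich inequality, fix $s, t \in [-a^{-1}, 1]$, $\lambda, \mu > 0$ with $\lambda + \mu = 1$, set $r = \lambda s + \mu t$, and let $(\xi_1, s_1) \in \hat K_x$ realize $\xi_1 + s s_1 = \overline g_v(K_s, x)$. The identity
\begin{align*}
\xi_1 + r s_1 = \lambda(\xi_1 + s s_1) + \mu(\xi_1 + t s_1) = \lambda \overline g_v(K_s, x) + \mu(\xi_1 + t s_1)
\end{align*}
together with $\overline g_v(K_r, x) \geq \xi_1 + r s_1$ and $\xi_1 + t s_1 \geq \underline g_v(K_t, x)$ (both because $(\xi_1, s_1) \in \hat K_x$) gives the upper half of (\ref{upp-low-ine}). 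The lower half follows from the symmetric argument using $(\xi_2, s_2) \in \hat K_x$ attaining $\xi_2 + t s_2 = \underline g_v(K_t, x)$.

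\textbf{Reverse direction.} Assume the graph-function conditions hold. Define the candidate lift
\begin{align*}
\hat K = \big\{(y, s) \in \mathbb{R}^n \times \mathbb{R} : \underline g_v(K_r, y|_{v^\bot}) \leq \langle y, v\rangle + r s \leq \overline g_v(K_r, y|_{v^\bot}) \text{ for all } r \in [-a^{-1}, 1]\big\}.
\end{align*}
The representations in (\ref{up-low-gra}) show $\overline g_v(K_r, \cdot)$ is concave and $\underline g_v(K_r, \cdot)$ is convex on $v^\bot$, so each defining constraint is convex, whence $\hat K$ is a bounded convex set and $P_t(\hat K) \subseteq K_t$ is direct. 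For the reverse inclusion, given $y + \eta v \in K_t$ with $x = y|_{v^\bot}$, one constructs a slope $s$ so that $(y + (\eta - ts) v, s) \in \hat K$: exploit (\ref{upp-low-ine}) to choose boundary values $y_1, y_2$ in the slices at $r = -a^{-1}$ and $r = 1$ for which the linear interpolation through $(t, \eta)$ stays inside the slice band $R_x = \{(r, y') : \underline g_v(K_r, x) \leq y' \leq \overline g_v(K_r, x)\}$ throughout $[-a^{-1}, 1]$. Convexity of $\overline g_v$ and concavity of $\underline g_v$ in $t$ guarantee that $\eta$ lies in the required combined range, while (\ref{upp-low-ine}) prevents the interpolating line from exiting $R_x$ at intermediate parameters.

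\textbf{Main obstacle.} The harder direction is the reverse one: given a prescribed point of $K_t$, produce a slope $s$ that makes the interpolating line stay inside the slice band. Pointwise convexity of $\overline g_v$ and concavity of $\underline g_v$ in $t$ alone are insufficient, since they separately govern only the upper and lower boundaries of the band; the sandwich inequality (\ref{upp-low-ine}) is precisely the missing ingredient linking the two boundaries and ensuring a valid interpolating line exists. Points on $\partial R_x$ and at the parameter endpoints require a short limiting argument using compactness of $[-a^{-1}, 1]$.
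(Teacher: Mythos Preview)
The paper does not supply a proof of this proposition: it is quoted from Campi and Gronchi \cite{cam-gro-on2002} and used as a black box, so there is nothing in the paper to compare your argument against.

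On its own merits, your sketch follows the standard route (essentially that of the original Campi--Gronchi paper): identify a shadow system along $v$ with a projection family $K_t = P_t(\hat K)$ of a fixed convex body $\hat K\subset\mathbb{R}^{n+1}$, and read off the graph functions as support-type functionals of the two-dimensional slice $\hat K_x$. Your forward direction is clean. In the reverse direction the definition of $\hat K$ as an intersection of convex constraints is the right one, and the crucial point---that the sandwich inequality (\ref{upp-low-ine}) is exactly what allows a line through $(t,\eta)$ with endpoints on the slice boundaries at $r=-a^{-1}$ and $r=1$ to stay inside the band $R_x$---is correctly identified. What remains genuinely incomplete is the existence of such a line: you assert one can ``choose boundary values $y_1,y_2$'' but do not exhibit them. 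The usual argument is a one-line application of the intermediate value theorem to the function $y_1\mapsto$ (ordinate at $t$ of the chord from $(-a^{-1},y_1)$ to a suitably chosen point on the opposite edge), using that the band $R_x$ is connected in the $(r,y')$-plane; without this, the reverse inclusion $K_t\subseteq P_t(\hat K)$ is not established. You should also verify that $\hat K$ is bounded (this uses that the projection onto $v^\bot$ is a fixed compact set and that the interval $[-a^{-1},1]$ has nonempty interior, so two distinct values of $r$ pin down the $s$-coordinate).
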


A remarkable result about the volume of a shadow system is due to Shephard.
\begin{lem}\cite{she-sha1964} \label{vol-con-lem}
Every mixed volume involving $n$ shadow systems along the same
direction is a convex function of the parameter. In particular, the
volume $V(K_t)$ and all quermassintegrals
$W_i(K_t),\,\,i=1,\,2,\,\cdots, n$, of a shadow system are convex
functions of $t$.
\end{lem}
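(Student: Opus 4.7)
The plan is to prove the ordinary volume case directly via an integral representation over $v^\perp$, and then bootstrap to mixed volumes through Minkowski multilinearity and a lifting argument.

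For the volume $V(K_t)$, since the projection $K_t|_{v^\perp}$ is independent of $t$ for any shadow system along $v$ (built into the characterization behind Lemma~\ref{pro-ind-lem} and Proposition~\ref{cam-gro-thm}), Cavalieri's principle yields
\[
V(K_t) \;=\; \int_{K_0|_{v^\perp}}\!\bigl[\,\overline g_v(K_t,x)-\underline g_v(K_t,x)\,\bigr]\,dx.
\]
By Proposition~\ref{cam-gro-thm}, for each fixed $x$ both $t\mapsto\overline g_v(K_t,x)$ and $t\mapsto-\underline g_v(K_t,x)$ are convex, so the integrand is pointwise convex in $t$. Integration preserves convexity, which proves the volume case.

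For general mixed volumes $V(K_t^1,\dots,K_t^n)$, I would first check that shadow systems along $v$ are closed under nonnegative Minkowski combination: if the $K_t^j$ are such systems and $\lambda_j\geq 0$, then $L_t:=\sum_j\lambda_j K_t^j$ is again a shadow system along $v$. Indeed, projections are additive, while $\overline g_v(L_t,\cdot)$ is the supremal convolution of the $\overline g_v(K_t^j,\cdot)$'s, hence convex in $t$ as a supremum of convex-in-$t$ functions (and similarly for $-\underline g_v$). Applying the volume case to $L_t$ gives that
\[
V\Bigl(\sum_{j}\lambda_j K_t^j\Bigr) \;=\; \sum_{j_1,\dots,j_n}\lambda_{j_1}\cdots\lambda_{j_n}\,V(K_t^{j_1},\dots,K_t^{j_n})
\]
is convex in $t$ for every $\lambda\geq 0$. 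To pass from this to coefficient-wise convexity, I would invoke the standard $(n+1)$-dimensional lift, writing each $K_t^j=\pi_t(\hat K^j)$ with $\hat K^j\subset\mathbb R^{n+1}$ convex and $\pi_t(x,s)=x+tsv$; in the lifted picture the individual mixed volume $V(K_t^{j_1},\dots,K_t^{j_n})$ is itself a Cavalieri integral over $v^\perp$ whose integrand is a product of differences of lifted graph functions, each manifestly convex in $t$ for fixed base point.

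The quermassintegrals $W_i(K_t)=V(K_t[n-i],B[i])$ are special mixed volumes in which the unit ball $B$ (a trivial, $t$-independent shadow system) occupies $i$ slots, so their convexity is inherited from the mixed-volume case. The main obstacle is precisely the coefficient-extraction step: naive polarization of the identity above produces alternating signs and does \emph{not} preserve convexity, so one cannot reduce the mixed-volume statement to the volume statement by purely algebraic manipulations. The way around this is to use the lifted representation and a Cavalieri-type integral over $v^\perp$ to produce a manifestly convex-in-$t$ integrand for each mixed volume separately, which is the heart of Shephard's original argument.
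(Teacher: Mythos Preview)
The paper does not supply a proof of this lemma; it is simply quoted from Shephard \cite{she-sha1964}, so there is no in-paper argument to compare against. Your Cavalieri argument for the volume case is correct and standard: the projection onto $v^\perp$ is constant along a shadow system, and the integrand $\overline g_v(K_t,x)-\underline g_v(K_t,x)$ is pointwise convex in $t$ by Proposition~\ref{cam-gro-thm}.

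Your mixed-volume argument, however, has a real gap. You rightly note that polarization of $V\bigl(\sum_j\lambda_jK_t^j\bigr)$ introduces alternating signs and cannot yield coefficient-wise convexity, and you rightly reach for the $(n{+}1)$-dimensional lift $K_t^j=\pi_t(\hat K^j)$. But the last step---asserting that the individual mixed volume is a Cavalieri integral whose integrand is a \emph{product} of differences of graph functions, ``each manifestly convex in $t$''---is not correct: mixed volumes do not decompose as such products, and in any event a product of convex functions need not be convex. The mechanism that actually works is different. Write $\pi_t=\pi_0\circ\Phi_t$ with $\Phi_t(y,s)=(y+tsv,s)$ a unimodular shear of $\mathbb R^{n+1}$; the projection formula for mixed volumes gives
\[
V\bigl(K_t^{j_1},\dots,K_t^{j_n}\bigr)\;=\;c_n\,V_{n+1}\bigl(\hat K^{j_1},\dots,\hat K^{j_n},\,\Phi_t^{-1}S\bigr),
\]
where $S=[\,o,e_{n+1}\,]$ and $c_n>0$. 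Here $\Phi_t^{-1}S$ is the segment $[\,o,(-tv,1)\,]$, and for $\lambda\in[0,1]$ one has the set inclusion $[\,o,(-(\lambda t_1+(1-\lambda)t_2)v,1)\,]\subset\lambda\,\Phi_{t_1}^{-1}S+(1-\lambda)\,\Phi_{t_2}^{-1}S$. Convexity in $t$ then follows from monotonicity and Minkowski linearity of the $(n{+}1)$-dimensional mixed volume in its last argument. So your lift was the right move; the missing ingredient is the projection--segment identity together with monotonicity, not a Cavalieri product formula.
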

This result was largely used by Campi and Gronchi \cite{cam-gro-the2002,cam-gro-on2002,cam-gro-on2006,cam-gro-vol2006}, Li \cite{li-len-a2011} and Chen \cite{che-zho-yan-on2011}.  In the following, we show that the support function of asymmetric Orlicz zonotopes associate with a shadow system of multisets $\Lambda_t$, $t\in[-a^{-1},1]$,  is a Lipschitz function of t.

 Let $f=(f_1,\cdots,f_n):\mathbb R^n\rightarrow \mathbb R$ be a real value function, $\overline v=(v_1,\cdots,v_n)$, $\beta=(\beta_1,\cdots,\beta_n)$ and $v(t)=(v_1(t),\cdots, v_n(t))$ are vectors in $\mathbb R^n$, where $v_i(t)=v_i+t\beta_i v$ for $i=1,\cdots,n$,  as defined before. For notational convenience we define
  \begin{align}\label{orl-nor-def}
    \lVert f(t)\rVert_\varphi:=\inf\left\{\lambda>0; \sum^m_{i=1}\varphi\Big(\frac{\lvert f_i(t)\rvert}{\lambda}\Big)\leq 1\right\},
  \end{align}
for real-valued functions $f$ on $\mathbb R^n$, and $[\cdot]_+:=\max\{\cdot, 0\}$. By Lemma \ref{orl-nor-lem}, if $c>0$, we have $\lVert c f\rVert=\lvert c\rvert \lVert f\rVert$. Moreover, if $f\leq g$ for all $t\in \mathbb R^n$, we have
\begin{align} \lVert f\rVert_\varphi\leq \lVert g\rVert_\varphi.
\end{align}
\begin{lem}\label{lip-sup-lem}
Suppose $\varphi\in\mathcal C$ and $\Lambda_t$, $t\in [-a^{-1}, 1]$, is a shadow system of multiset $\Lambda=\{v_1,\cdots,v_m\}$ along the direction $v$ and speed function $\beta$. If $t_1, t_2\in [-a^{-1}, 1]$ and $x\in \mathbb R^n$, then
 \begin{align*}
   \big| h_{Z_{\varphi}^{+}\Lambda_{t_1}}(x)- h_{Z_{\varphi}^{+}\Lambda_{t_2}}(x)\big|\leq \lVert \beta\langle v, x\rangle \rVert_\varphi \lvert t_1-t_2\rvert.
 \end{align*}

\end{lem}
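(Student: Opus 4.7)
The plan is to bound the support function difference directly from the defining equation $\sum_i \varphi(\langle v_i(t),x\rangle_+/h_{Z_\varphi^+\Lambda_t}(x)) = 1$ guaranteed by Lemma \ref{orl-nor-lem}. Write $v_i(t) = v_i + t\beta_i v$, set $\lambda_j := h_{Z^+_\varphi \Lambda_{t_j}}(x)$ for $j=1,2$, and abbreviate $\mu := |t_1-t_2|\,\lVert \beta\langle v,x\rangle\rVert_\varphi$. The goal is to show $\lambda_1 \le \lambda_2 + \mu$; the reverse inequality follows by swapping the roles of $t_1$ and $t_2$.

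The first key step is to use the $1$-Lipschitz property of $s\mapsto s_+$, which gives
\begin{align*}
\langle v_i(t_1),x\rangle_+ \;\le\; \langle v_i(t_2),x\rangle_+ + |t_1-t_2|\,|\beta_i\langle v,x\rangle|
\end{align*}
for each $i$. The second step is to apply the convex combination inequality for $\varphi$ (the same one invoked right after \eqref{asy-orl-zon} to establish sublinearity of $h_{Z^+_\varphi\Lambda}$), with the two weights $\lambda_2/(\lambda_2+\mu)$ and $\mu/(\lambda_2+\mu)$, so that
\begin{align*}
\varphi\!\left(\frac{\langle v_i(t_1),x\rangle_+}{\lambda_2+\mu}\right)
\le \frac{\lambda_2}{\lambda_2+\mu}\,\varphi\!\left(\frac{\langle v_i(t_2),x\rangle_+}{\lambda_2}\right)
+ \frac{\mu}{\lambda_2+\mu}\,\varphi\!\left(\frac{|\beta_i\langle v,x\rangle|}{\lVert \beta\langle v,x\rangle\rVert_\varphi}\right).
\end{align*}

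Summing over $i=1,\dots,m$, the first sum on the right equals $1$ by the definition of $\lambda_2 = h_{Z^+_\varphi \Lambda_{t_2}}(x)$, and the second sum equals $1$ by the definition \eqref{orl-nor-def} of the Orlicz norm $\lVert \beta\langle v,x\rangle\rVert_\varphi$. Hence $\sum_i \varphi(\langle v_i(t_1),x\rangle_+/(\lambda_2+\mu)) \le 1$, and Lemma \ref{orl-nor-lem}(iii) yields $\lambda_1 \le \lambda_2 + \mu$, as required.

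The one subtlety to watch is the degenerate situation where $\lambda_2 = 0$ or $\lVert \beta\langle v,x\rangle\rVert_\varphi = 0$, which would make the convex-combination weights ill-defined. The latter forces $\beta_i\langle v,x\rangle = 0$ for every $i$, so $\langle v_i(t),x\rangle$ does not depend on $t$ and the inequality is trivial; the former means $\langle v_i(t_2),x\rangle \le 0$ for all $i$, in which case one can bypass the convex combination step and estimate $\sum_i \varphi(\langle v_i(t_1),x\rangle_+/\mu)$ directly by the same inequality with $B_i = |t_1-t_2||\beta_i\langle v,x\rangle|$. I expect the bookkeeping of these boundary cases to be the only non-routine part; the main inequality itself is a direct, one-line application of convexity once the Lipschitz bound on $s\mapsto s_+$ is in hand.
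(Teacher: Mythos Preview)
Your proposal is correct and follows essentially the same approach as the paper. The paper first proves the triangle inequality $\lVert f+g\rVert_\varphi \le \lVert f\rVert_\varphi + \lVert g\rVert_\varphi$ via the very convexity step you write out, derives the reverse triangle inequality $\big|\lVert f\rVert_\varphi - \lVert g\rVert_\varphi\big| \le \lVert f-g\rVert_\varphi$, and then combines it with the Lipschitz bound $|a_+-b_+|\le |a-b|$ and monotonicity of $\lVert\cdot\rVert_\varphi$; you simply inline that triangle-inequality argument directly (and handle the degenerate cases more carefully than the paper does).
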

 \begin{proof}
   From definition (\ref{orl-nor-def}) and Lemma \ref{orl-nor-lem}, we have
   \begin{align*}\begin{split}
     \rVert f(t)\lVert_\varphi=\lambda_1 \Leftrightarrow \sum^{m}_{i=1}\varphi\Big(\frac{|f_i(t)|}{\lambda_1}\Big)=1; \\
     \rVert g(t)\lVert_\varphi=\lambda_2 \Leftrightarrow \sum^{m}_{i=1}\varphi\Big(\frac{|g_i(t)|}{\lambda_2}\Big)=1.
\end{split}
\end{align*}
   The convexity of $\varphi$  shows
   \begin{align}\label{con-lam}
     \varphi\left(\frac{|f_i(t)+g_i(t)|}{\lambda_1+\lambda_2}\right)\leq\frac{\lambda_1}{\lambda_1+\lambda_2}
     \varphi\left(\frac{|f_i(t)|}{\lambda_1}\right)+\frac{\lambda_2}{\lambda_1+\lambda_2}
     \varphi\left(\frac{|g_i(t)|}{\lambda_2}\right).
   \end{align}
 Summing both sides of (\ref{con-lam}) with respect to $i=1,\cdots, m$, gives
 \begin{align*}
\sum^m_{i=1}\varphi\left(\frac{|f_i(t)+g_i(t)|}{\lambda_1+\lambda_2}\right) \leq 1.
 \end{align*}
 By Lemma \ref{orl-nor-lem} again we have
 \begin{align}\label{orl-tri-ine}
 \lVert f(t)+g(t)\lVert_\varphi\leq  \lVert f(t)\lVert_\varphi+\lVert g(t)\lVert_\varphi.
 \end{align}
If we take $f=f-g+g$, we have
\begin{align*}
  \lVert f(t)\lVert_\varphi-\lVert g(t)\lVert_\varphi\leq  \lVert f(t)-g(t)\lVert_\varphi.
\end{align*}
Which means
\begin{align}\label{tri-orl}
 \big| \lVert f(t)\lVert_\varphi-\lVert g(t)\lVert_\varphi\big|\leq  \lVert f(t)-g(t)\lVert_\varphi.
\end{align}
Moreover, together with the definition of the support function $h_{Z^+_\varphi \Lambda_t}(x)$ and (\ref{orl-nor-def}), we have
$$h_{Z_{\varphi}^{+}\Lambda_t}(x)=\lVert \langle v(t), x\rangle_+\rVert_\varphi.$$
Then, by (\ref{tri-orl}) we have
\begin{align}\begin{split}
\big|h_{Z_{\varphi}^{+}\Lambda_{t_1}}(x)-h_{Z_{\varphi}^{+}\Lambda_{t_2}}(x)\big|&=
\big|\lVert \langle v(t_1), x\rangle_+\rVert_\varphi-\lVert \langle v(t_2), x\rangle_+\rVert_\varphi\big|\\
&\leq\lVert \langle v(t_1), x\rangle_+-\langle v(t_2), x\rangle_+\rVert_\varphi\\
&\leq\lVert \beta\langle x,v\rangle_+(t_1-t_2)\rVert_\varphi=\lVert \beta\langle x,v\rangle_+\rVert_\varphi|t_1-t_2|.
\end{split}
\end{align}
We complete the proof.
\end{proof}

\begin{thm}\label{sha-orl-thm}
Suppose $\varphi\in \mathcal C$, $\Lambda_t$, $t\in [-a^{-1},1]$, is a shadow system of multisets along the direction $v\in S^{n-1}$. Then $Z^+_\varphi \Lambda_t$, $t\in [-a^{-1},1]$, is a shadow system of convex bodies along the direction $v$.
\end{thm}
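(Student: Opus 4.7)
The plan is to verify the hypotheses of Proposition~\ref{cam-gro-thm} for the family $K_t:=Z_\varphi^+\Lambda_t$. Since Lemma~\ref{pro-ind-lem} already gives that $K_t|_{v^\bot}$ is independent of $t$, what remains is to check that $t\mapsto \overline g_v(K_t,x)$ and $t\mapsto -\underline g_v(K_t,x)$ are convex for every $x\in K_0|_{v^\bot}$, and that the sandwich inequality (\ref{upp-low-ine}) holds.

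The key intermediate claim I would establish first is a joint convexity statement: for each fixed $x\in v^\bot$, the map $(t,w)\mapsto h_{K_t}(v+w)-\langle x,w\rangle$ on $\mathbb R\times v^\bot$ is jointly convex, and similarly for $h_{K_t}(-v-w)+\langle x,w\rangle$. Indeed, writing $f_i(t,w):=\langle v_i+t\beta_iv,\,v+w\rangle_+=[\langle v_i,v\rangle+\langle v_i,w\rangle+t\beta_i]_+$, each $f_i$ is the positive part of an affine function of $(t,w)$, hence convex; then the monotonicity and sublinearity of $\lVert\cdot\rVert_\varphi$ (both established in the proof of Lemma~\ref{lip-sup-lem}) give that $h_{K_t}(v+w)=\lVert f(t,w)\rVert_\varphi$ is jointly convex, and subtracting the term $\langle x,w\rangle$ linear in $w$ preserves this.

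From the representation (\ref{up-low-gra}), both $\overline g_v(K_t,x)$ and $-\underline g_v(K_t,x)$ are partial infima over $w\in v^\bot$ of functions jointly convex in $(t,w)$. The standard fact that partial minimization of a jointly convex function is convex in the remaining variable then yields the two convexity conditions of Proposition~\ref{cam-gro-thm}.

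The main obstacle is the mixed inequality (\ref{upp-low-ine}), which couples $\overline g_v$ at time $s$ with $\underline g_v$ at time $t$ and does not follow from joint convexity alone. I plan to treat it by lifting to $\mathbb R^{n+1}$: set $\tilde\Lambda=\{(v_i,\beta_i)\}_{i=1}^m$ and define $\tilde Z=Z_\varphi^+\tilde\Lambda\subset\mathbb R^{n+1}$. A direct support-function computation then shows $K_t=\pi_t(\tilde Z)$ for the linear map $\pi_t(y,r)=y+trv$. Passing to the $2$-dimensional slice $\tilde Z_x:=\{(y_v,r)\in\mathbb R^2:(x+y_vv,r)\in\tilde Z\}$, one has $\overline g_v(K_t,x)=\sup\{y_v+tr:(y_v,r)\in\tilde Z_x\}$ and $\underline g_v(K_t,x)=\inf\{y_v+tr:(y_v,r)\in\tilde Z_x\}$. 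Choosing $(y_v^*,r^*)\in\tilde Z_x$ attaining $\overline g_v(K_s,x)=y_v^*+sr^*$ and using $y_v^*+tr^*\ge\underline g_v(K_t,x)$, one obtains $\overline g_v(K_\tau,x)\ge y_v^*+\tau r^*=\lambda(y_v^*+sr^*)+\mu(y_v^*+tr^*)\ge\lambda\overline g_v(K_s,x)+\mu\underline g_v(K_t,x)$, which is the right half of (\ref{upp-low-ine}); the left half is symmetric. This completes the verification of the hypotheses of Proposition~\ref{cam-gro-thm}.
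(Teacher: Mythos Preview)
Your proposal is correct. The treatment of the convexity of $t\mapsto\overline g_v(K_t,x)$ and $t\mapsto -\underline g_v(K_t,x)$ is essentially the paper's argument in compressed form: the paper verifies directly that
\[
\lVert\langle v(\nu s+\mu t),\,v+\nu w_1+\mu w_2\rangle_+\rVert_\varphi
\le \nu\,\lVert\langle v(s),v+w_1\rangle_+\rVert_\varphi+\mu\,\lVert\langle v(t),v+w_2\rangle_+\rVert_\varphi
\]
and then passes to infima over $w_1,w_2$, which is exactly your ``joint convexity in $(t,w)$ plus partial minimization'' statement written out.

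Where the two proofs genuinely diverge is the mixed inequality~(\ref{upp-low-ine}). The paper stays in $\mathbb R^n$ and works purely with support functions: in the infimum defining $\nu\,\overline g_v(Z_\varphi^+\Lambda_s,x)$ it substitutes $w=\nu^{-1}(w_1-\mu w_2)$, uses the affine identity
\[
\langle v_i(s),\,\nu v+w_1-\mu w_2\rangle
=\mu\,\langle v_i(t),-v-w_2\rangle+\langle v_i(\nu s+\mu t),\,v+w_1\rangle,
\]
applies subadditivity of $[\cdot]_+$ and the triangle inequality for $\lVert\cdot\rVert_\varphi$, and splits the infimum into the two required pieces. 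Your route via the lift $\tilde Z=Z_\varphi^+\{(v_i,\beta_i)\}\subset\mathbb R^{n+1}$ with $K_t=\pi_t(\tilde Z)$ is more geometric: the graph functions become the support values of the two-dimensional slice $\tilde Z_x$ in the directions $(1,t)$, and (\ref{upp-low-ine}) follows by evaluating at a single extremal point of $\tilde Z_x$. This is cleaner and in fact recovers the classical Rogers--Shephard picture that a shadow system is precisely a one-parameter family of projections of a fixed $(n{+}1)$-dimensional convex body; having established $K_t=\pi_t(\tilde Z)$ you could even dispense with Proposition~\ref{cam-gro-thm} altogether. The paper's computation, by contrast, avoids any auxiliary body and remains entirely within the $\lVert\cdot\rVert_\varphi$-calculus already set up for Lemma~\ref{lip-sup-lem}.
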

\begin{proof}
  Let $x$ be a point in $Z^+_\varphi \Lambda_0|_{v^\bot}$, and $\nu, \mu\in (0,1)$ satisfy $\nu+\mu=1$. By Lemma \ref{pro-ind-lem}, it is remains to show that the hypotheses of Proposition \ref{cam-gro-thm} on properties of the graph functions are satisfied.

  By assumption, the shadow system $\Lambda_t$ is equal to, say, $\{v_1(t),\cdots,v_m(t)\}$ where $v_i(t)=v_i+t\beta_iv$.  With these definitions, the support function of $Z_\varphi^+\Lambda_t$ can be written as
\begin{align*}
h_{Z_\varphi^+\Lambda_t}(u)=\inf\bigg\{\lambda>0: \sum^{m}_{i=1}\varphi\Big(\frac{\langle v_i(t), u\rangle_+}{\lambda}\Big)\leq 1\bigg\}=\lVert \langle v(t), u\rangle_+\rVert_\varphi.
\end{align*}
To  establish the convexity of the uppergraph and lowergraph function as  functions of $t$,  we firstly prove the uppergraph function is a convex function of $t$. Notice that $\Lambda_t$ is also a shadow system in direction $-v$. Then the vector $v$ can be replaced by $-v$ and by application of the identity $\overline g_{-v}(\cdot,x)=-\underline g_v(\cdot, x)$, we can obtain that the lowergraph function is a convex function of $t$.

By the definition of $\overline g_v(Z_\varphi^+\Lambda_{t}, x)$, we have
\begin{align*}
\nonumber\overline g_v(Z_\varphi^+\Lambda_{\nu s+\mu t}, x)&=\inf_{w_1,w_2\in v^\bot}\left\{h_{Z_\varphi^+\Lambda_{\nu s+\mu t}}(v+\nu w_1+\mu w_2)-\langle x, \nu w_1+\mu w_2\rangle\right\}\\
&=\inf_{w_1,w_2\in v^\bot}\Big\{\lVert \langle v(
\nu s+\mu t), v+\nu w_1+\mu w_2\rangle_+\rVert_\varphi-\langle x, \nu w_1+\mu w_2\rangle\Big\}.
\end{align*}
By the inequality $\max\{u+v,0\}\leq\max\{u,0\}+\max\{v,0\}$ and definition (\ref{orl-nor-def}), we have
 \begin{align*}
  & \lVert \langle v(\nu s+\mu t),v+\nu w_1+\mu w_2\rangle_+\lVert_\varphi\\
   &=\inf\left\{\lambda>0: \sum^{m}_{i=1}\varphi\bigg(\frac{\langle v_i+(\nu s +\mu t)\beta_iv, v+\nu w_1+\mu w_2\rangle_+}{\lambda}\bigg)\leq 1\right\}.
   \end{align*}
The convexity of $\varphi$ and (\ref{orl-tri-ine}) imply that
 \begin{align}\label{orl-nor-ine}
\lVert \langle v(\nu s+\mu t),v+\nu w_1+\mu w_2\rangle_+\lVert_\varphi\leq\nu\lVert\langle v(s),v+w_1\rangle_+\lVert_\varphi+\mu\lVert\langle v(t),v+w_2\rangle_+\lVert_\varphi.
 \end{align}
 Thus, together with (\ref{orl-nor-ine}) and the expression of $\overline g_v(K,x)$ we have
 \begin{align}\begin{split}
\inf_{w_1,w_2\in v^\bot}&\Big\{\lVert \langle v(
\nu s+\mu t), v+\nu w_1+\mu w_2\rangle_+\rVert_\varphi-\langle x, \nu w_1+\mu w_2\rangle\Big\}\\
&\leq \inf_{w_1\in v^\bot}\Big\{\nu\lVert \langle v(s), v+ w_1\rangle_+\rVert_\varphi-\nu\langle x, w_1\rangle\Big\}\\
&+\inf_{w_2\in v^\bot}\Big\{\mu\lVert \langle v(t), v+ w_2\rangle_+\rVert_\varphi-\mu\langle x, w_2\rangle\Big\}.
\end{split} \end{align}
Which means
\begin{align}\label{upp-orl-fun}
  \overline g_v(Z_\varphi^+\Lambda_{\nu s+\mu t}, x)\leq \nu  \overline g_v(Z_\varphi^+\Lambda_{ s}, x)+\mu  \overline g_v(Z_\varphi^+\Lambda_{t}, x).
\end{align}
Hence $t\rightarrow  \overline g_v(Z_\varphi^+\Lambda_{ t}, x)$ is convex.

Next we verify the inequality (\ref{upp-low-ine}) of Proposition \ref{cam-gro-thm}. First we show
\begin{align}\label{rig-ineq}
  \nu \overline g_v(Z^+_\varphi \Lambda_s,x)+ \mu \underline g_v(Z^+_\varphi \Lambda_t,x)\leq \overline g_v(Z^+_\varphi \Lambda_{\nu s+\mu t},x).
\end{align}
To see this, let $w\in v^\bot$,
\begin{align}\label{low-ine}
\nu\overline g_v(Z^+_\varphi \Lambda_s,x)=\inf_{w\in v^\bot}\Big\{\lVert \nu\langle v(s),v+w\rangle_+\rVert_\varphi-\nu\langle x, w\rangle\Big\}.
\end{align}
Let $w=\nu^{-1}(w_1-\mu w_2)$, $w_1, w_2\in v^\bot$, in (\ref{low-ine}), we have
\begin{align}
\nu\overline g_v(Z^+_\varphi \Lambda_s,x)=\inf_{w_1,w_2\in v^\bot}\Big\{\lVert \langle v(s),\nu v+w_1-\mu w_2\rangle_+\rVert_\varphi-\langle x, w_1-\mu w_2\rangle\Big\},
\end{align}
where
\begin{align*}
\langle v(s),\nu v&+w_1-\mu w_2\rangle_+= \langle v_i+s\beta_iv,(1-\mu) v+w_1-\mu w_2\rangle_+\\
&=\mu\langle v_i+ t\beta_iv, -v-w_2\rangle_++\langle v_i+(\nu s+\mu t)\beta_iv, v+w_1\rangle_+.
\end{align*}
By (\ref{orl-tri-ine}) we have,
\begin{align*}
\nu\overline g_v&(Z^+_\varphi \Lambda_s,x)=\inf_{w_1,w_2\in v^\bot}\Big\{\lVert \langle v(s),\nu v+w_1-\mu w_2\rangle_+\rVert_\varphi-\langle x, w_1-\mu w_2\rangle\Big\}\\
&=\inf_{w_1,w_2\in v^\bot}\Big\{\lVert \mu\langle v_i+ t\beta_iv, -v-w_2\rangle_++\langle v_i+(\nu s+\mu t)\beta_iv, v+w_1\rangle_+\rVert_\varphi-\langle x, w_1-\mu w_2\rangle\Big\}\\
&\leq\mu\inf_{w\in v^\bot}\Big\{\lVert\langle v(t), -v-w_2\rangle_+\rVert_\varphi-\langle x, - w_2\rangle\Big\}+\inf_{w\in v^\bot}\Big\{\lVert\langle v(\nu s+\mu t), v+w_1\rangle_+\rVert_\varphi-\langle x, w_1\rangle\Big\}\\
&=-\mu\underline g_v(Z^+_\varphi\Lambda_t,x)+\overline g_v(Z^+_\varphi \Lambda_{\nu s+\mu t},x).
\end{align*}
Which implies the inequality (\ref{rig-ineq}).

The left hand of inequality (\ref{upp-low-ine}) can be derived from inequality (\ref{rig-ineq}) by replacing $v$ by $-v$, and using the following facts
\begin{align*}
  \underline g_{-v}(\cdot,x)=-\overline g_v(\cdot, x) \,\,\,\,\,\,\, and \,\,\,\,\,\,\,\,\overline g_{-v}(\cdot,x)=-\underline g_v(\cdot, x).
\end{align*}
Now we complete the proof.
\end{proof}
Now Theorem \ref{sha-orl-thm} together with Lemma \ref{pro-ind-lem} imply the following Theorem.
\begin{thm}\label{orl-vol-rat-thm}
  Suppose $\varphi\in\mathcal C$ and $\Lambda$ is a finite and spanning multiset. If $\Lambda_t$, $t\in [-a^{-1},1]$, is an orthogonalization of $\Lambda$ defined by (\ref{ort-sha-sys}), then:

 (\romannumeral 1) The volume $V(Z^{+,*}_\varphi \Lambda_t)^{-1}$ is a convex function of $t$. In particular, the inverse volume product for asymmetric Orlicz zonotopes associated with $\Lambda_t$ is a convex function of $t$.

  (\romannumeral 2)  The volume $V(Z^{+}_\varphi \Lambda_t)^{-1}$ is a convex function of $t$. In particular, the volume ratio for asymmetric Orlicz zonotopes associate with $\Lambda_t$ is a convex function of $t$.
 \end{thm}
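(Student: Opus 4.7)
The plan is to deduce both convexity assertions from the shadow-system identification of Theorem \ref{sha-orl-thm} together with reciprocal-volume convexity principles for shadow systems. Write $K_t := Z^+_\varphi\Lambda_t$ for $t\in[-a^{-1},1]$; by Theorem \ref{sha-orl-thm}, $\{K_t\}$ is a shadow system of convex bodies along $v$. An auxiliary ingredient used in passing from the main assertions to the ``in particular'' conclusions is the Campi--Gronchi volume invariance of the orthogonalization (\ref{ort-sha-sys}) for the asymmetric $L_1$-zonotope, namely that $V(Z^+_1\Lambda_t)\equiv V(Z^+_1\Lambda)$ is a positive $t$-constant (\cite{cam-gro-vol2006}, extended to the asymmetric setting by Weberndorfer \cite{web-sha2013}).

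For part (i), I would invoke the Meyer--Reisner / Campi--Gronchi theorem on polars of shadow systems \cite{mey-rei-sha2006,cam-gro-on2006}: whenever $\{K_t\}$ is a shadow system of convex bodies along $v$, the map $t \mapsto V(K_t^{*})^{-1}$ is convex on the parameter interval, polarity being taken at the Santal\'o point $s(K_t)$. Applied to $K_t = Z^+_\varphi\Lambda_t$ this yields at once the convexity of $V(Z^{+,*}_\varphi\Lambda_t)^{-1}$. Since $V(Z^+_1\Lambda_t)$ is a positive constant in $t$,
\[
\bigl[V(Z^{+,*}_\varphi\Lambda_t)\, V(Z^+_1\Lambda_t)\bigr]^{-1}\;=\;\frac{V(Z^{+,*}_\varphi\Lambda_t)^{-1}}{V(Z^+_1\Lambda)}
\]
is a positive constant multiple of a convex function, hence convex, which is the ``in particular'' conclusion for the inverse volume product.

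For part (ii), the assertion is the reciprocal-volume convexity of $V(K_t)^{-1}$, which is strictly stronger than the convexity of $V(K_t)$ furnished by Shephard's Lemma \ref{vol-con-lem}. My approach would be to combine the graph-function representation of the volume with the Orlicz-zonotope form of the support function. By Lemma \ref{pro-ind-lem}, the base $P := K_t|_{v^\perp}$ is $t$-independent, so Fubini gives
\[
V(K_t)\;=\;\int_P \bigl(\overline g_v(K_t,x) - \underline g_v(K_t,x)\bigr)\,dx.
\]
The shadow-system inequalities of Proposition \ref{cam-gro-thm}, together with the explicit Orlicz-norm representation $h_{K_t}(u) = \lVert\langle v(t), u\rangle_+\rVert_\varphi$ established in the proof of Theorem \ref{sha-orl-thm} (which makes the fiber length a piecewise-tractable function of $t$), are to be combined to upgrade Shephard's convexity to convexity of $V(K_t)^{-1}$. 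Multiplying by the positive constant $V(Z^+_1\Lambda)$ then yields the claimed convexity of the volume ratio $V(Z^+_\varphi\Lambda_t)/V(Z^+_1\Lambda_t)$.

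The main obstacle is part (ii): the passage from Shephard's $V(K_t)$ convex to the reciprocal $V(K_t)^{-1}$ convex is not formal and in fact fails for generic shadow systems. The proof must genuinely exploit the Orlicz-zonotope structure of $K_t$ (the Orlicz-norm form of the support function and the resulting control on the fiber-length function along the shadow-system parameter) rather than only the generic shadow-system machinery. Part (i), by contrast, is a direct application of a known reciprocal polar-volume theorem whose hypothesis is exactly the shadow-system property supplied by Theorem \ref{sha-orl-thm}.
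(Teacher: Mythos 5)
Your part (i) is essentially the paper's own argument: Theorem \ref{sha-orl-thm} makes $Z^+_\varphi\Lambda_t$ a shadow system along $v$, the Meyer--Reisner extension of the Campi--Gronchi polar-volume result \cite{mey-rei-sha2006,cam-gro-on2006} (polarity at the Santal\'o point) gives convexity of $t\mapsto V(Z^{+,*}_\varphi\Lambda_t)^{-1}$, and the Campi--Gronchi/Weberndorfer fact that $V(Z^+_1\Lambda_t)$ is constant for the orthogonalization (\ref{ort-sha-sys})--(\ref{sha-sys-con}) converts this into convexity of the inverse volume product. That is exactly what the paper intends when it says the theorem follows from Theorem \ref{sha-orl-thm} and Lemma \ref{pro-ind-lem}.

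Part (ii), however, has a genuine gap. The exponent $-1$ in the printed statement of (ii) is inconsistent with its own ``in particular'' clause and with the way the theorem is used later: in the proof of the volume ratio theorem, Lemma \ref{gen-vol-rat-lem} is applied to $\mathcal R(\Lambda)=V(Z^+_\varphi\Lambda)/V(Z^+_1\Lambda)$, so what is needed (and what the paper actually obtains) is convexity of $t\mapsto V(Z^+_\varphi\Lambda_t)$, which is immediate from Shephard's Lemma \ref{vol-con-lem} applied to the shadow system of Theorem \ref{sha-orl-thm}, combined with the constancy of $V(Z^+_1\Lambda_t)$. You instead take the reciprocal literally, correctly observe that reciprocal-volume convexity fails for generic shadow systems, and then only gesture at ``combining'' Proposition \ref{cam-gro-thm} with the Orlicz-norm form of the support function to ``upgrade'' Shephard's convexity --- no mechanism is given, so (ii) is left unproven in your proposal. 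Worse, your closing deduction is backwards: if $V(K_t)^{-1}$ were convex, multiplying by the positive constant $V(Z^+_1\Lambda)$ would give convexity of the \emph{reciprocal} of the volume ratio, not of $V(Z^+_\varphi\Lambda_t)/V(Z^+_1\Lambda_t)$; the ratio's convexity requires convexity of $V(K_t)$ itself, i.e., precisely the Shephard statement you set aside as insufficient. The fix is simply to prove and use the non-inverted claim: $V(Z^+_\varphi\Lambda_t)$ is convex by Lemma \ref{vol-con-lem}, and the volume ratio is this convex function divided by a positive $t$-constant.
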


Theorem \ref{orl-vol-rat-thm} shows that the inverse volume product and the volume ratio for asymmetric Orlicz zonotopes  are nondecreasing if $\Lambda$ is replaced by either $\Lambda_{-a^{-1}}$ or $\Lambda_1$, because convex functions attain global maxima at the boundary of compact intervals.

\section{the equality condition}

The following lemma is crucial for our proof of main results.
\begin{lem}\label{spa-obt-lem}
  Suppose $\varphi\in \mathcal C $, and $\Lambda$ is a finite and spanning multiset. Replace all vectors in $\Lambda$ that point in the same direction by their sum, and denote this new multiset by $\overline \Lambda$. Then the following inequalities
  \begin{align}\begin{split}
    \label{orl-vol-pro-eq}
    \frac{V(Z^+_\varphi\Lambda)}{V(Z^+_1 \Lambda)}\leq \frac{V(Z^+_\varphi\overline\Lambda)}{V(Z^+_1 \overline\Lambda)},\\
  V(Z^{+,*}_\varphi\Lambda)V(Z^+_1 \Lambda)\geq(V^{+,*}_\varphi\overline \Lambda)(V(Z^+_1 \overline\Lambda),
  \end{split}
\end{align}
hold. With equalities, when $\varphi\neq Id$, if and only if $\Lambda=\overline \Lambda$.
\end{lem}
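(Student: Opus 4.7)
The plan is to reduce the lemma to the atomic operation of merging two collinear vectors of $\Lambda$. Write $\Lambda=\{v_1,\dots,v_m\}$ with $v_1=\alpha_1 u$ and $v_2=\alpha_2 u$ positive multiples of some unit vector $u$, and set $\Lambda'=\{(\alpha_1+\alpha_2)u,v_3,\dots,v_m\}$. Iterating this merge finitely many times produces $\overline\Lambda$, so it suffices to prove the two inequalities at each such step. The starting observation is that $\langle\alpha_1 u,w\rangle_++\langle\alpha_2 u,w\rangle_+=\langle(\alpha_1+\alpha_2)u,w\rangle_+$ for every $w\in\mathbb R^n$, so the $\varphi=\mathrm{Id}$ support function $h_{Z^+_1\Lambda}(w)=\sum_i\langle v_i,w\rangle_+$ is left unchanged by the merge. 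Hence $Z^+_1\Lambda=Z^+_1\Lambda'$, and both claimed inequalities reduce to $V(Z^+_\varphi\Lambda)\le V(Z^+_\varphi\Lambda')$ and $V(Z^{+,*}_\varphi\Lambda)\ge V(Z^{+,*}_\varphi\Lambda')$.

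For the volume inequality the key tool is the super-additivity of $\varphi$: since $\varphi$ is convex on $[0,\infty)$ with $\varphi(0)=0$, writing $a=\frac{a}{a+b}(a+b)+\frac{b}{a+b}\cdot 0$ (and similarly for $b$) and applying convexity gives $\varphi(a)+\varphi(b)\le\varphi(a+b)$ for all $a,b\ge 0$. Applied to $a=\alpha_1\langle u,w\rangle_+/\lambda$ and $b=\alpha_2\langle u,w\rangle_+/\lambda$, this yields
\begin{equation*}
\sum_{v\in\Lambda}\varphi\!\left(\frac{\langle v,w\rangle_+}{\lambda}\right)\le\sum_{v\in\Lambda'}\varphi\!\left(\frac{\langle v,w\rangle_+}{\lambda}\right),
\end{equation*}
so whenever the right hand sum is $\le 1$ the left hand sum is too, and Lemma~\ref{orl-nor-lem} gives $h_{Z^+_\varphi\Lambda}(w)\le h_{Z^+_\varphi\Lambda'}(w)$ for every $w$, i.e.\ $Z^+_\varphi\Lambda\subseteq Z^+_\varphi\Lambda'$. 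The polar inequality follows by Santal\'o monotonicity: with $K=Z^+_\varphi\Lambda\subseteq Z^+_\varphi\Lambda'=L$, one has $s(K)\in\mathrm{int}(K)\subseteq\mathrm{int}(L)$, and the inclusion $K\subseteq L$ gives $L^{s(K)}\subseteq K^{s(K)}$, so
\begin{equation*}
V(L^*)=\min_{s}V(L^{s})\le V(L^{s(K)})\le V(K^{s(K)})=V(K^*).
\end{equation*}
Iterating these single-pair inequalities finitely many times produces both global inequalities.

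For the equality statement under $\varphi\neq\mathrm{Id}$, equality in the volume ratio together with $V(Z^+_1\Lambda)=V(Z^+_1\overline\Lambda)$ and $Z^+_\varphi\Lambda\subseteq Z^+_\varphi\overline\Lambda$ forces $Z^+_\varphi\Lambda=Z^+_\varphi\overline\Lambda$; in the volume product, equality in the Santal\'o chain forces (by uniqueness of the Santal\'o minimizer) $s(L)=s(K)$ and then $L^{s(K)}=K^{s(K)}$, hence again $L=K$. Either way, the resulting equality of support functions pulls back to $\varphi(\alpha_1 s)+\varphi(\alpha_2 s)=\varphi((\alpha_1+\alpha_2)s)$ for every $s$ of the form $\langle u,w\rangle_+/h_{Z^+_\varphi\Lambda}(w)$ attained as $w$ varies in $\mathbb R^n$. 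For convex $\varphi$ with $\varphi(0)=0$, pointwise equality in super-additivity forces $\varphi$ to be affine on an interval of positive length, and combined with $\varphi(1)=1$ and $\varphi\neq\mathrm{Id}$ this is the step that should yield a contradiction, so that one concludes $\Lambda=\overline\Lambda$. The main obstacle I foresee is exactly this final step: one must use the spanning of $\Lambda$ (and $\varphi\neq\mathrm{Id}$) to guarantee that the range of accessible $s$-values is wide enough that $\varphi$ cannot be affine on all of it while failing to be the identity globally.
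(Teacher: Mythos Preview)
Your proof of the two inequalities is correct and is essentially the paper's argument. Both rest on the identity $Z_1^+\Lambda=Z_1^+\overline\Lambda$ (since $\langle\alpha_1 u,w\rangle_++\langle\alpha_2 u,w\rangle_+=\langle(\alpha_1+\alpha_2)u,w\rangle_+$), the super-additivity $\varphi(a)+\varphi(b)\le\varphi(a+b)$ coming from convexity and $\varphi(0)=0$ to get $Z_\varphi^+\Lambda\subseteq Z_\varphi^+\overline\Lambda$, and the Santal\'o-point monotonicity for the polar volume. The only cosmetic difference is that the paper merges all collinear families at once via a partition $\{I_j\}$ of the index set, while you iterate the merge of a single collinear pair; the content is identical, and your reduction from volume equality back to $Z_\varphi^+\Lambda=Z_\varphi^+\overline\Lambda$ is cleaner than what the paper writes.

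Your worry about the equality step is well placed, and you should know that the paper does not resolve it more carefully than you do. The paper asserts that equality in $\varphi(x_1)+\cdots+\varphi(x_l)\le\varphi(x_1+\cdots+x_l)$ holds ``if and only if $\varphi$ is a linear function'', and separately that if $v_1,v_2\in\Lambda$ are parallel then $h_{Z^+_\varphi\overline\Lambda}(v_1)>h_{Z^+_\varphi\Lambda}(v_1)$ strictly whenever $\varphi\neq\mathrm{Id}$. Neither claim follows from mere convexity: equality in super-additivity at $a,b>0$ only forces $\varphi$ to be affine on $[0,a+b]$, and the relevant arguments $(\alpha_1+\alpha_2)s(u)=\langle(\alpha_1+\alpha_2)e,u\rangle_+/h_{Z^+_\varphi\overline\Lambda}(u)$ all lie in $[0,1]$ (and in general only in $[0,M]$ for some $M\le 1$ depending on $\overline\Lambda$). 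A $\varphi\in\mathcal C$ that is linear on $[0,M]$ but not the identity can then satisfy $Z^+_\varphi\Lambda=Z^+_\varphi\overline\Lambda$ with $\Lambda\neq\overline\Lambda$; for instance, take $n=2$, $\overline\Lambda=\{2e_1,e_1+e_2,e_1-e_2\}$, split $2e_1$ into $e_1,e_1$, and choose $\varphi$ piecewise linear with a kink only beyond $\varphi^{-1}(1/2)$. So the obstacle you flag is a genuine gap shared with the paper; closing it requires either an extra hypothesis (e.g.\ strict convexity of $\varphi$) or an argument specific to how the lemma is used downstream.
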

\begin{proof}
Let multiset $\Lambda=\{v_1,\cdots,v_m\}$, and $\overline \Lambda=\{w_1,\cdots,w_k\}$. By the construction of $\overline \Lambda$ we have
\begin{align*}
  w_j=\sum_{i\in I_j}v_i,
\end{align*}
where $I_j$, $j=1,\cdots k$, is a partition of $\{1,\cdots m\}$, and the vectors in every $\{v_i:i\in I_j\}$ point in the same direction. If $\varphi=Id$, that means $\varphi(t)=t$, here we write $Z^+_{Id}\Lambda=Z^+_1\Lambda$. It is easy to know that
\begin{align}
\label{eur-zon-pro}Z_1^+\Lambda=Z_1^+\overline\Lambda.\end{align}

Now assume that $\varphi\neq Id$. Let $u\in S^{n-1}$, and set
\begin{align*}
  h_{Z^+_\varphi\Lambda}(u)=\lambda \,\,\,\,\,\,and \,\,\,\,\,\,\, h_{Z^+_\varphi\overline\Lambda}(u)=\overline\lambda.
\end{align*}
By the definition of $Z^+_\varphi\overline\Lambda$, and note that all $v_i$ point in the same direction for $i\in I_j$,  then we have
 $$\langle \sum_{i\in I_j}v_i,u\rangle_+=\sum_{i\in I_j}\langle v_i,u\rangle_+.$$ It follows that
\begin{align*}
 \sum^{k}_{j=1}\varphi\left(\frac{\langle\sum_{i\in I_j} v_i,u\rangle_+}{\overline\lambda}\right)=\sum^{k}_{j=1}\varphi\left(\frac{\sum_{i\in I_j}\langle v_i,u\rangle_+}{\overline\lambda}\right)=1.
\end{align*}
Since the fact that $\varphi$ is convex and increasing, we have that if $x_1,\cdots, x_l\in[0,\infty)$, then
\begin{align}\label{con-fun}
\varphi(x_1+\cdots+x_l)\geq \varphi(x_1)+\cdots+\varphi(x_l),
\end{align}
with equality if and only if $\varphi$ is a linear function.
So we obtain
\begin{align}\label{orl-var-equ}
1=\sum_{j=1}^k\varphi\left(\frac{\sum_{i\in I_j}\langle v_i,u\rangle_+}{\overline\lambda}\right)\geq\sum_{i=1}^m\varphi\left(\frac{\langle v_i,u\rangle_+}{\overline\lambda}\right).
\end{align}
Since $ h_{Z^+_\varphi\Lambda}(u)=\lambda$, by Lemma \ref{orl-nor-lem}, we obtain
$\lambda\leq\overline \lambda$. Since $\varphi\neq Id$, with equality holds only if all sum over $i\in I_j$ contain at most one positive summand, that means $\Lambda=\overline\Lambda$. In fact, if $\Lambda\neq\overline \Lambda$, say $v_1$ and $v_2$ point in the same direction, by the convexity of $\varphi$, then $h_{Z^+_\varphi \overline\Lambda}(v_1)> h_{Z^+_\varphi \Lambda}(v_1)$. Hence we obtain
$Z^+_\varphi \Lambda\subset Z^+_\varphi \overline\Lambda.$

On the other hand, if $\Lambda\neq \overline\Lambda$, with equality in (\ref{orl-var-equ}) if and only if
\begin{align*}
\sum_{j=1}^k\varphi\left(x_j\right)=\varphi\left(\sum_{i=1}^k x_i\right),
\end{align*}
holds for arbitrary $k$ and $x_i\in \mathbb R$. Combine  with the convexity and the normalization of $\varphi$, and solve this functional equation we know that $\varphi(t)=t$.  Then $$Z^{+}_\varphi \Lambda=Z^+_1\Lambda=Z^+_1\overline\Lambda.$$
The first inequality  of (\ref{orl-vol-pro-eq}) now follows immediately. To the second inequality of (\ref{orl-vol-pro-eq}), if $\varphi\neq Id$, note that
\begin{align*}
  Z^{+,*}_\varphi\Lambda=(Z^{+}_\varphi\Lambda-s(Z^{+}_\varphi\Lambda))^o\supseteq(Z^{+}_\varphi\overline\Lambda-s(Z^{+}_\varphi\Lambda))^o,
\end{align*}
with equality if and only if $\Lambda=\overline\Lambda$. Thus
\begin{align*}
  V(Z^{+,*}_\varphi\Lambda)\geq V((Z^{+}_\varphi\overline\Lambda-s(Z^{+}_\varphi\Lambda))^o)\geq V(Z^{+,*}_\varphi\overline\Lambda).
\end{align*}
Together with (\ref{eur-zon-pro}) show the second inequality of (\ref{orl-vol-pro-eq}).
\end{proof}

In the following, we observe that a set that can be written as a disjoint union $\Lambda_\bot\cup \{v_1,\cdots,v_l\}$ is obtuse if and only if there are disjoint nonempty subset $I_1,\cdots, I_l$ of $\{1,\cdots,n\}$ and positive numbers $\mu_i$ such that, for every $j\in\{1,\cdots,l\}$,
\begin{align*}
  v_j=\sum_{i\in I_j}-\mu_ie_i.
\end{align*}

The following Lemma shows that every spanning obtuse set has a linear image of above type.
\begin{lem}\cite{web-sha2013}\label{spa-mul-lem}
Suppose $\Lambda$ is a spanning obtuse set, then the following three statements holds:

(\romannumeral 1) If $B\subset A$ is a basis, then the vectors in $A\setminus B$ are pairwise orthogonal and have nonpositive components with respect to the basis $B$.

(\romannumeral 2) Every $GL(n)$ image of $\Lambda$ that contains the canonical basis $\Lambda_\bot$ is obtuse.

(\romannumeral 3) Suppose in addition that $\Lambda$ contains the canonical basis. For every $y\in Z^+_\varphi\Lambda$ there is a $\phi\in GL(n)$ such that $\phi y$ has nonnegative coordinates with respect to the canonical basis and $\Lambda_\bot \subset \phi \Lambda$.
\end{lem}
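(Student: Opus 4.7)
The plan is to prove (i) first, derive (ii) as a direct corollary, and handle (iii) using both (i) and the structural characterization stated in the paragraph preceding the lemma. The main tool throughout is that the Gram matrix of any obtuse basis is a positive-definite Z-matrix (positive diagonal, nonpositive off-diagonal entries), hence an $M$-matrix, with a componentwise-nonnegative inverse.

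For (i), fix a basis $B = \{b_1,\ldots,b_n\} \subset \Lambda$ and split the claim in two. For the nonpositive-coefficient part, given $w \in \Lambda \setminus B$ write $w = \sum c_i b_i$; the Gram matrix $G$ of $B$ satisfies $Gc = (\langle b_j, w\rangle)_j$, whose entries are $\le 0$ by obtuseness of $\Lambda$, so $c = G^{-1}\cdot(\text{nonpositive vector}) \le 0$. For pairwise orthogonality, consider distinct $w_1, w_2 \in \Lambda \setminus B$ with coefficient vectors $c^1, c^2$ and $B$-supports $S_1, S_2$. I would first establish $S_1 \cap S_2 = \emptyset$ by a basis exchange: for $k \in S_1$, the set $B' = (B \setminus \{b_k\}) \cup \{w_1\}$ is a basis contained in $\Lambda$, and applying the coefficient claim to $B'$ forces the coefficient of $w_1$ in the $B'$-expansion of $w_2$, which one computes to be $c_k^2/c_k^1$, to be $\le 0$; but this ratio is also $\ge 0$ (ratio of two nonpositives with $c_k^1 \neq 0$), so $c_k^2 = 0$. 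Given disjoint supports, the obtuseness bound $\langle w_1, b_j\rangle \le 0$ for $j \in S_2$ reduces (using $c_j^1 = 0$) to $\sum_{i \in S_1} c_i^1 \langle b_i, b_j\rangle \le 0$, a sum of nonnegative terms; each summand must vanish, giving $\langle b_i, b_j\rangle = 0$ for all $i \in S_1, j \in S_2$, and hence $\langle w_1, w_2\rangle = 0$.

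Part (ii) is immediate from (i) by taking $B := \phi^{-1}\Lambda_\bot$: the $B$-coefficients of any $w \in \Lambda \setminus B$ furnished by (i) are precisely the standard coordinates of $\phi w$, so these are nonpositive, and distinct $\phi w_1, \phi w_2$ have disjoint coordinate supports. A three-case analysis ($e_i$--$e_j$, $e_i$--$\phi w$, and $\phi w_1$--$\phi w_2$) then shows every pairwise inner product in $\phi\Lambda$ is nonpositive.

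For (iii), the characterization before the lemma gives $\Lambda = \Lambda_\bot \cup \{v_1,\ldots,v_l\}$ with $v_j = -\sum_{i \in I_j} \mu_i e_i$, the $I_j$ disjoint nonempty subsets of $\{1,\ldots,n\}$ and $\mu_i > 0$. I would first pin down the coordinates of $y \in Z^+_\varphi\Lambda$ by computing $h_{Z^+_\varphi\Lambda}(-e_i)$ from the support function definition: for $i \notin \bigcup_j I_j$ no vector in $\Lambda$ contributes positively, so $h_{Z^+_\varphi\Lambda}(-e_i) = 0$ and $y_i \ge 0$; for $i \in I_j$ only $v_j$ contributes, and $\varphi(\mu_i/\lambda) = 1$ gives $h_{Z^+_\varphi\Lambda}(-e_i) = \mu_i$, hence $y_i \ge -\mu_i$. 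Then I would build a basis $B \subset \Lambda$ block by block: on each $I_j$, pick $k_j \in I_j$ minimizing $y_i/\mu_i$, and if $y_{k_j} \le 0$ swap $e_{k_j}$ for $v_j$ (admissible since $v_j$ has nonzero $e_{k_j}$-component). The linear map $\phi$ sending $B$ to $\Lambda_\bot$ satisfies $\Lambda_\bot \subset \phi\Lambda$ by construction, and a short computation shows that $\phi y$ has coordinate $-y_{k_j}/\mu_{k_j} \ge 0$ in direction $v_j$, coordinates $y_i - y_{k_j}\mu_i/\mu_{k_j} \ge 0$ for $i \in I_j \setminus \{k_j\}$ (by the minimality of $k_j$), and the original $y_i \ge 0$ in the remaining directions.

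The main technical obstacle is the orthogonality half of (i): the obtuseness of $\Lambda$ only gives $\langle w_1, w_2\rangle \le 0$, so upgrading this to equality requires two distinct ingredients — the basis-exchange step to get disjoint supports, followed by a second invocation of obtuseness against the basis vectors $b_j$ to force the crossing Gram entries to vanish. Neither step alone would close the gap.
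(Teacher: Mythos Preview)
The paper does not prove this lemma at all: immediately after the statement it reads ``This Lemma is established by Weberndorfer in \cite{web-sha2013}, here we omit the proof of this lemma.'' So there is no in-paper argument to compare against; your proposal supplies what the paper deliberately leaves out.

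Your argument is correct. A few remarks. In (i), the $M$-matrix step is the right tool: the Gram matrix of an obtuse basis is symmetric positive definite with nonpositive off-diagonals, hence inverse-nonnegative, which yields $c\le 0$ at once. The basis-exchange computation $c_k^2/c_k^1$ for the $w_1$-coefficient of $w_2$ in $B'$ is accurate, and the two-step upgrade to orthogonality (disjoint supports first, then force the cross Gram entries $\langle b_i,b_j\rangle$ to vanish via the sign-squeeze on $\langle w_1,b_j\rangle$) is exactly what is needed; note you implicitly use that $B'\subset\Lambda$ so that the nonpositive-coefficient half of (i) applies again with $B'$ in place of $B$. Part (ii) follows cleanly as you say, since the $B$-coordinates of $w$ are the standard coordinates of $\phi w$ and disjoint supports give $\langle \phi w_1,\phi w_2\rangle=0$ outright. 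In (iii), your block-by-block construction of $B$ and the coefficient computations are correct; the bound $y_i\ge -\mu_i$ that you derive for $i\in I_j$ is actually not used anywhere in the rest of the argument (only $y_i\ge 0$ for $i\notin\bigcup_j I_j$, the swap criterion $y_{k_j}\le 0$, and the minimality of $k_j$ are needed), so you can drop that computation. Also, you rely on the structural description of obtuse sets containing $\Lambda_\bot$ stated just before the lemma; this is harmless, since that description is verified directly from the definition of ``obtuse'' and does not depend on the lemma itself.
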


This Lemma is established by Webermdorfer in \cite{web-sha2013}, here we omit the proof this lemma.

One of the immediate implications of the above Lemma is that a spanning obtuse set contains at least $n$ and not more than $2n$ vectors. Now we give the equality condition of our main results.
\begin{lem}\label{orl-vol-lem}
  Suppose $\varphi\in\mathcal C$ and $\Lambda$ is a spanning obtuse set. Then
  \begin{align*}
    \frac{V(Z^+_\varphi\Lambda)}{V(Z^+_1\Lambda)}=\frac{V(Z^+_\varphi\Lambda_\bot)}{V(Z^+_1\Lambda_\bot)}.
  \end{align*}
\end{lem}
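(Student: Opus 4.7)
The ratio $V(Z^+_\varphi\Lambda)/V(Z^+_1\Lambda)$ is $GL(n)$-invariant, since both $Z^+_\varphi$ and $Z^+_1$ are $GL(n)$-equivariant and the two volumes transform by the common factor $|\det\phi|$. By Lemma \ref{spa-mul-lem}(i) I can pick a basis $B^*\subset\Lambda$ and apply the linear map sending $B^*$ to $\Lambda_\bot$; Lemma \ref{spa-mul-lem}(ii) ensures the image is still obtuse. So I may assume $\Lambda_\bot\subset\Lambda$, and then $\Lambda\setminus\Lambda_\bot=\{v_1,\ldots,v_l\}$ with $v_j=-\sum_{i\in I_j}\mu_{j,i}e_i$ for pairwise disjoint nonempty $I_j\subset\{1,\ldots,n\}$ and $\mu_{j,i}>0$. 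The first key step is to prove the identity $Z^+_\varphi\Lambda\cap\mathbb R^n_{\geq 0}=Z^+_\varphi\Lambda_\bot$. The inclusion $\supseteq$ is immediate since enlarging the generating set only enlarges $Z^+_\varphi\Lambda$ and $Z^+_\varphi\Lambda_\bot$ already lies in the nonnegative orthant. For the reverse inclusion, note that $h_{Z^+_\varphi\Lambda_\bot}(u)$ depends only on $u_+:=((u_i)_+)_{i=1}^n$, and when $u_+\geq 0$ the extra generators satisfy $\langle v_j,u_+\rangle\leq 0$, forcing $h_{Z^+_\varphi\Lambda}(u_+)=h_{Z^+_\varphi\Lambda_\bot}(u_+)$; then for $x\in Z^+_\varphi\Lambda\cap\mathbb R^n_{\geq 0}$ one chains $\langle x,u\rangle\leq\langle x,u_+\rangle\leq h_{Z^+_\varphi\Lambda}(u_+)=h_{Z^+_\varphi\Lambda_\bot}(u)$.

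Next, I would invoke Lemma \ref{spa-mul-lem}(iii) to partition $Z^+_\varphi\Lambda$ into cells indexed by bases of $\Lambda$. For any $y\in Z^+_\varphi\Lambda$, that lemma supplies $\phi\in GL(n)$ with $\phi y\geq 0$ and $\Lambda_\bot\subset\phi\Lambda$; applying the key identity to the still-obtuse multiset $\phi\Lambda$ gives $\phi y\in Z^+_\varphi\Lambda_\bot$, so $y\in\phi^{-1}Z^+_\varphi\Lambda_\bot$, where $B:=\phi^{-1}\Lambda_\bot$ is a basis contained in $\Lambda$ and $\phi^{-1}Z^+_\varphi\Lambda_\bot=Z^+_\varphi B$ by $GL(n)$-equivariance. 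Writing $C_B$ for the positive cone spanned by $B$ and $A_B$ for the linear map sending $\Lambda_\bot$ to $B$, this yields $Z^+_\varphi\Lambda=\bigcup_B A_B Z^+_\varphi\Lambda_\bot$ with each cell contained in $C_B$. To turn the union into a genuine tiling I must show that the cones $\{C_B\}_B$, as $B$ ranges over bases of $\Lambda$, have pairwise disjoint interiors. Under the structural description above, a basis $B$ corresponds to a subset $J\subset\{1,\ldots,l\}$ together with a replacement index $i_*(j)\in I_j$ for each $j\in J$ ($v_j$ replacing $e_{i_*(j)}$); $C_B$ factors over the blocks $I_j$, and within each block the cones tile $\mathbb R^{I_j}$ via the rule that $x\in\mathbb R^{I_j}$ sits in the cone whose $i_*(j)$ is the strict minimizer of $i\mapsto x_i/\mu_{j,i}$ (or in the ``all $e_i$'s'' cone when $x\geq 0$).

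Once this tiling is in hand, volumes add: $V(Z^+_\varphi\Lambda)=\sum_B|\det A_B|\,V(Z^+_\varphi\Lambda_\bot)$. The classical zonotope volume formula gives $V(Z^+_1\Lambda)=V(\sum_{w\in\Lambda}[0,w])=\sum_B|\det A_B|$, the sum again over bases $B\subset\Lambda$, and since $V(Z^+_1\Lambda_\bot)=V([0,1]^n)=1$, dividing produces $V(Z^+_\varphi\Lambda)/V(Z^+_1\Lambda)=V(Z^+_\varphi\Lambda_\bot)=V(Z^+_\varphi\Lambda_\bot)/V(Z^+_1\Lambda_\bot)$ as required. The main obstacle I anticipate is the tiling claim in the second paragraph: verifying that the positive cones of bases of a spanning obtuse multiset form a simplicial fan on the positive hull of $\Lambda$, so that the pieces in the decomposition meet only in lower-dimensional faces. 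The reduction via the block structure $I_j$ makes this combinatorial inside each coordinate group and manageable, but it is the step carrying all the geometric content specific to the obtuse hypothesis.
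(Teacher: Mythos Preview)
Your proposal is correct and follows the same overall strategy as the paper: reduce by $GL(n)$-invariance to $\Lambda_\bot\subset\Lambda$, establish the dissection $Z^+_\varphi\Lambda=\bigcup_B Z^+_\varphi B$ over bases $B\subset\Lambda$, show the pieces overlap only in measure zero, and then sum volumes using that each spanning piece is a $GL(n)$-image of $Z^+_\varphi\Lambda_\bot$.

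The execution differs in two places, and in the first your argument is noticeably cleaner. For the inclusion $Z^+_\varphi\Lambda\cap\mathbb R^n_{\ge 0}\subset Z^+_\varphi\Lambda_\bot$, the paper does not use your direct support-function identity $h_{Z^+_\varphi\Lambda}(u_+)=h_{Z^+_\varphi\Lambda_\bot}(u_+)=h_{Z^+_\varphi\Lambda_\bot}(u)$; instead it passes through an auxiliary set $\widetilde\Lambda=\{e_1,\dots,e_n,-\mu_1e_1,\dots,-\mu_ne_n\}$, then reduces to the one-parameter family $\Lambda'(\mu)=\Lambda_\bot\cup\{-\mu e_1\}$ and analyzes the $\mu$-independence of $Z^+_\varphi\Lambda'(\mu)\cap e_1^\bot$ and of the uppergraph function $\overline g_{e_1}(Z^+_\varphi\Lambda'(\mu),\cdot)$ via the formulas (\ref{up-low-gra}), iterating over coordinates. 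Your route bypasses all of this. For the measure-zero overlap, the paper takes the shorter path: after a further $GL(n)$ move sending one basis to $\Lambda_\bot$, the other basis $\Lambda^2$ misses some $e_1$, whence $h_{Z^+_\varphi\Lambda_\bot}(-e_1)=0$ and $h_{Z^+_\varphi\Lambda^2}(e_1)=0$ force the intersection into $e_1^\bot$. Your fan argument (cones $C_B$ tiling the positive hull of $\Lambda$, verified blockwise as cones over facets of a simplex containing the origin) is correct and more structural, but heavier than necessary; the paper's hyperplane observation would let you drop what you flagged as the ``main obstacle'' entirely. Finally, where you invoke the classical zonotope volume formula for $V(Z^+_1\Lambda)$, the paper simply applies the same dissection to $\varphi=\mathrm{Id}$, which amounts to the same thing.
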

\begin{proof}
 Let  $\Lambda$ be a spanning obtuse set. The $GL(n)$ invariance of the volume ratio for asymmetric Orlicz zonotopes and Lemma \ref{spa-mul-lem}, we may assume that $\Lambda=\{w_1,\cdots,w_{m}\}$, where $n\leq m\leq 2n$, contains the canonical basis $\Lambda_\bot=\{e_1,\cdots,e_n\}$.
 In the following, if we can establish the dissection formula
  \begin{align}\label{dis-orl-asy}
Z^+_\varphi\Lambda=\bigcup_{1\leq i_1<\cdots <i_n\leq m}Z^+_\varphi\big\{w_{i_1},\cdots,w_{i_n}\}.
  \end{align}
Then, we have
\begin{align} \label{vol-dis}
  V(Z^+_\varphi \Lambda)=\sum_{1\leq i_1<\cdots<i_n\leq m}V\Big(Z^+_\varphi\{v_{i_1},\cdots,v_{i_n}\}\Big).
\end{align}
The $GL(n)$ equivariance of $Z^+_\varphi $  together with (\ref{vol-dis}) for $\varphi(t)=t$, we have
\begin{align*}
  \frac{V(Z^+_\varphi \Lambda)}{V(Z^+_1 \Lambda)}=\frac{\sum\limits_{1\leq i_1<\cdots<i_n\leq m}V\Big(Z^+_1\{v_{i_1},\cdots,v_{i_n}\}\Big)}{\sum\limits_{1\leq i_1<\cdots<i_n\leq m}V\Big(Z^+_1\{v_{i_1},\cdots,v_{i_n}\} \Big)}=\frac{V(Z^+_\varphi \Lambda_\bot)}{V(Z^+_1 \Lambda_\bot)}.
\end{align*}
Here we used the $GL(n)$ equivariance of $Z^+_\varphi$ and the fact $V\big(Z^+_\varphi \{v_{i_1},\cdots v_{i_n}\}\big)=0$, if $\{v_{i_1},\cdots v_{i_n}\}$ is not a $GL(n)$ image of conical basis $\Lambda_\bot$. Hence we have
\begin{align*}
\frac{V(Z^+_\varphi \Lambda)}{V(Z^+_1\Lambda)}=\frac{V(Z^+_\varphi \Lambda_\bot)}{V(Z^+_1 \Lambda_\bot)}.
\end{align*}

In the following, we will show the dissection formula (\ref{dis-orl-asy}) holds.
Let $y\in\bigcup_{1\leq i_1<\cdots <i_n\leq m}Z^+_\varphi\big\{w_{i_1},\cdots,w_{i_n}\}$, it must belong to, we say, $Z^+_\varphi\{w_1,\cdots,w_n\}$. In order to prove $y\in Z^+_\varphi\Lambda$. Let
\begin{align*}
h_{Z^+_\varphi\{w_1,\cdots,w_n\}}(u)=\lambda_0\,\,\,\,\,\,\,and \,\,\,\,\,\,\,h_{Z^+_\varphi\Lambda}(u)=\lambda_1.
\end{align*}
 By the definition of the support function, we have
\begin{align*}
 \sum_{i=1}^{n}\varphi\left(\frac{\langle w_i,u\rangle_+}{\lambda_0}\right)= 1 \,\,\,\,\,\,\,\,\, and\,\,\,\,\,\,\,\,\,\,\,
\sum_{j=1}^{m}\varphi\left(\frac{\langle w_j,u\rangle_+}{\lambda_1}\right)= 1.
\end{align*}
Since $\varphi$ is increasing, which implies
 \begin{align*}
  \sum_{i=1}^{n}\varphi\left(\frac{\langle w_i,u\rangle_+}{\lambda_1}\right)\leq\sum_{j=1}^{m}\varphi\left(\frac{\langle w_j,u\rangle_+}{\lambda_1}\right).
\end{align*}
By Lemma \ref{orl-nor-lem} we have $\lambda_1\geq \lambda_0$. That means
\begin{align}\label{asy-con-equ}
Z^+_\varphi\{w_1,\cdots,w_n\}\subseteq Z^+_\varphi\Lambda.
\end{align}
We prove $Z^+_\varphi\Lambda$ contains the right hand side of (\ref{dis-orl-asy}).
Now it remains to prove that $Z^+_\varphi\Lambda$ is a subset of the right hand side of (\ref{dis-orl-asy}).
Let $y\in  Z^+_\varphi\Lambda$, it is sufficient to show that there is a $\phi\in GL(n)$ such that $y\in Z^+_\varphi \phi^{-1}\Lambda_\bot$ and $\phi^{-1}\Lambda_\bot\subseteq \Lambda$. By Lemma \ref{spa-mul-lem}, there is a $\phi\in GL(n)$ such that $\phi y$ has nonnegative coordinates with respect to the canonical basis and $\Lambda_\bot\subseteq \phi\Lambda$.  Moreover, $\phi \Lambda$ is obtuse, then we can write
\begin{align*}
  \phi \Lambda=\Lambda_\bot\cup\{w_1,\cdots, w_{m-n}\},
\end{align*}
 and there are disjoint subsets $I_1,\cdots, I_{m-n}$, and positive number $\mu_i$ such that, for $1\leq j\leq m-n$,
\begin{align}\label{neg-com}
  w_j=\sum_{i\in I_j}-\mu'_ie_{i}.
\end{align}
 Let $h_{Z^+_\varphi \phi\Lambda}=\lambda_0$.
Then we have
\begin{align*}
 \sum_{i=1}^{n}\varphi\left(\frac{\langle e_i,u\rangle_+}{\lambda_0}\right)+\sum_{i=j}^{m-n}\varphi\left(\frac{\langle w_j,u\rangle_+}{\lambda_0}\right)= 1.
\end{align*}
Note that the convexity and strictly increasing  of $\varphi$ imply that, there exists a constant $\nu>0$ such that
 \begin{align*}
  \sum_{j=1}^{n}\varphi\left(\frac{\nu\langle -\mu'_je_j,u\rangle_+}{\lambda_0}\right)\geq\sum_{j=1}^{m-n}\varphi\left(\frac{\langle w_j,u\rangle_+}{\lambda_0}\right).
\end{align*}
We write $ \mu_j=\nu\mu'_j$, $j=1,\cdots ,n$, and define the set $\widetilde\Lambda=\{e_1,\cdots,e_n,-\mu_1e_1,\cdots,-\mu_ne_n\}$. Obviously, $\widetilde \Lambda$ is an obtuse set. Moreover, We have
\begin{align*}
\sum_{i=1}^{n}\varphi\left(\frac{\langle e_i,u\rangle_+}{\lambda_0}\right)+\sum_{j=1}^{n}\varphi\left(\frac{\langle -\mu_je_j,u\rangle_+}{\lambda_0}\right)\geq 1.
\end{align*}
Then by Lemma \ref{orl-nor-lem} we have $\lambda_0\leq h_{Z^{+}_\varphi \widetilde\Lambda}(u)$. Then we have
\begin{align}\label{asy-con-equ}
Z^+_\varphi\phi\Lambda\subseteq Z^+_\varphi\widetilde\Lambda=Z^+_\varphi\big\{e_1,\cdots,e_n,-\mu_1e_1,\cdots,-\mu_ne_n\big\}.
\end{align}

It remains to show that $Z^+_{\varphi} \phi\Lambda\subseteq Z^+_\varphi \Lambda_\bot$.
First note that $\Lambda$  is an obtuse,  $w_i$ has negative coordinates  with respect to the canonical basis $\Lambda_\bot$. In order to simply the computation,  we assume that $\Lambda'(\mu)=\{e_1,\cdots,e_n,-\mu e_1\}$, where $ \mu\geq 0$.
For $x\in e^\bot_1\cap e_2^\bot$, by (\ref {up-low-gra})  we have
\begin{align}
\label{inf-upp-fun}\overline g_{e_2}(Z^+_\varphi\Lambda'(\mu),x)&=\inf_{w\in e^\bot_2}\left\{h_{Z^+_\varphi\Lambda'(\mu)}(e_2+w)- \langle x, w\rangle\right\}.
 \end{align}
 Here
 \begin{align}\begin{split}\label{inf-orl-upp}
h_{Z^+_\varphi\Lambda'(\mu)}(e_2+w)&=\inf\bigg\{\lambda>0:\varphi\Big(\frac{\langle e_1,w\rangle_+}{\lambda}\Big)+\varphi\Big(\frac{\langle -\mu e_1,w\rangle_+}{\lambda}\Big)\\
&+\sum_{i=2}^n\varphi \Big(\frac{\langle e_i, e_2+w\rangle_+}{\lambda}\Big)\leq 1\bigg\},
 \end{split}\end{align}
Note that, for all $w\in e^\bot_2$, the scalar product $\langle x, w\rangle$ does not dependent on the first component of $w$. The increasing of $\varphi$ together with the expression of (\ref{inf-orl-upp}) show that  it suffices to compute the infimum of (\ref{inf-upp-fun}) over all $w\in e_1^\bot\cap e_2^\bot$. It is now obvious that the uppergraph function $\overline g_{e_2}(Z^+_\varphi\Lambda'(\mu),x)$ is independent of $\mu$ for every $x\in e^\bot\cap e^\bot_2$. The same argument applied to the lowergraph function leads to the same conclusion, so we infer that
 \begin{align*}
Z^+_\varphi\Lambda'(\mu)\cap e^\bot_1
 \end{align*}
 is independent of $\mu$. Moreover, the support function of $Z^+_\varphi \Lambda'(\mu)$ evaluated at vectors $w\in e^\bot_1$,
  \begin{align*}
    h_{Z^+_\varphi\Lambda'(\mu)}(w)=\inf\bigg\{\lambda>0:\sum_{i=2}^n\varphi\Big(\frac{\langle e_2,w\rangle_+}{\lambda}\Big)\leq 1\bigg\},
  \end{align*}
  is a constant function of $\mu$. Equivalently,
 \begin{align*}
Z^+_\varphi\Lambda'(\mu)|_{e^\bot_1},
 \end{align*}
 is independent of $\mu$.

If $\mu=1$, the convex body $Z^+_\varphi\Lambda'(1)$ is symmetric with respect to reflections in the hyperplane $e^\bot_1$. Then for $y\in Z^+_\varphi \Lambda'(\mu)$, we have
 \begin{align*}
y|_{e^\bot_1}\in Z^+_\varphi\Lambda'(\mu)|_{e^\bot_1}= Z^+_\varphi\Lambda'(1)|_{e^\bot_1}= Z^+_\varphi\Lambda'(1)\cap{e^\bot_1}= Z^+_\varphi\Lambda'(\mu)\cap{e^\bot_1}
 \end{align*}
for all $\mu$. In particular, $\underline g_{e_1}(Z^+_\varphi\Lambda'(\mu), y|_{e^\bot_1})$ is negative for all $\mu$. Moreover, the uppergraph function $\overline g_{e_1}(Z^+_\varphi\Lambda'(\mu),y|_{e_1^\bot})$ is independent of $\mu$. Because, $h_{Z^+_\varphi \Lambda'(\mu)}(e_1+w)$ is independent of $\mu$, for $w\in e^\bot_1$.
Hence
\begin{align}\begin{split}
y&\in\Big\{y|_{e^\bot_1}+re_1:0\leq r\leq \overline g_{e_1}(Z^+_\varphi \Lambda'(\mu),  y|_{e^\bot_1})\Big\}\\
  &=\Big\{y|_{e^\bot_1}+re_1:0\leq r\leq \overline g_{e_1}(Z^+_\varphi \Lambda'(0),  y|_{e^\bot_1})\Big\}\subset Z^+_\varphi\Lambda' (0).
\end{split}\end{align}
Then we have $Z^+_\varphi\Lambda' (\mu)\subseteq Z^+_\varphi \Lambda_\bot.$ This together with (\ref{asy-con-equ}) we have $Z^+_\varphi \Lambda'(\mu)= Z^+_\varphi\Lambda_\bot.$
Repeating this argument for $\mu_2,\cdots,\mu_n$, if them are not zero. We have that $\phi y$ is contained in $Z^+_\varphi \Lambda_\bot$, which shows the equality of (\ref{dis-orl-asy}).

Moreover, if we can show the intersection of any two distinct parts in the dissection (\ref{dis-orl-asy}) has volume zero, we can complete the proof. To see this, let $\Lambda^1,\Lambda^2\in \Lambda$, each contain $n$ vectors and assume that $\Lambda^1\neq\Lambda^2 .$ If one of these sets is not spanning, then the intersection $Z^+_\varphi\Lambda^1\cap Z^+_\varphi\Lambda^2$ is a set of volume zero contained in a hyperplane. Otherwise, without loss of generality, $\Lambda^1=\Lambda_\bot$ and $\Lambda^2$ does not contain $e_1$. Then by the definition of support function (\ref{asy-orl-zon}), we have $h_{Z^+_\varphi\Lambda^1}(-e_1)=0,$ and $h_{Z^+_\varphi\Lambda^2}(e_1)=0$. Then we obtain that $Z^+_\varphi\Lambda^1\cap Z^+_\varphi\Lambda^2$ is a set of volume zero contained in the hyperplane $e^\bot_1$.
So we complete the proof.
\end{proof}

If take $\varphi(t)=t^p$, $p\geq 1$, this result reduces to $L_p$ case.
\begin{cor}
   Suppose $p\geq 1$ and $\Lambda$ is a spanning obtuse set. Then
  \begin{align*}
    \frac{V(Z^+_p\Lambda)}{V(Z^+_1\Lambda)}=\frac{V(Z^+_p\Lambda_\bot)}{V(Z^+_1\Lambda_\bot)}.
  \end{align*}
\end{cor}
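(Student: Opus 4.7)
The corollary is a direct specialization of the preceding Lemma \ref{orl-vol-lem} to the power function $\varphi(t) = t^p$, so my plan is simply to verify that this choice of $\varphi$ falls within the hypotheses of that lemma and then invoke it.

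First I would check that $\varphi(t) = t^p$ belongs to the class $\mathcal{C}$ whenever $p \geq 1$. This is immediate: the map $t \mapsto t^p$ on $[0,\infty)$ is convex for $p \geq 1$, strictly increasing, satisfies $\varphi(0) = 0$ and $\varphi(1) = 1$. Thus the hypotheses of Lemma \ref{orl-vol-lem} are met.

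Next I would recall the identification made in the preliminaries: for $\varphi(t) = t^p$ with $p \geq 1$, the definition (\ref{asy-orl-zon}) of the support function of the asymmetric Orlicz zonotope $Z^+_\varphi \Lambda$ reduces to the defining relation of the asymmetric $L_p$-zonotope $Z^+_p \Lambda$ of Weberndorfer \cite{web-sha2013}. Consequently $Z^+_\varphi \Lambda = Z^+_p \Lambda$ and $Z^+_\varphi \Lambda_\bot = Z^+_p \Lambda_\bot$ as convex bodies, and the equality
\begin{align*}
\frac{V(Z^+_\varphi \Lambda)}{V(Z^+_1 \Lambda)} = \frac{V(Z^+_\varphi \Lambda_\bot)}{V(Z^+_1 \Lambda_\bot)}
\end{align*}
furnished by Lemma \ref{orl-vol-lem} translates verbatim into the claimed $L_p$-identity, completing the proof.

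There is no real obstacle here since the $L_p$ case is a genuine special case of the Orlicz setting; the only point worth checking carefully is the membership $t^p \in \mathcal{C}$, which uses $p \geq 1$ in an essential way to guarantee convexity. For $0 < p < 1$ the function would fail to be convex and the argument underlying Lemma \ref{orl-vol-lem} (in particular inequality (\ref{con-fun})) would no longer apply.
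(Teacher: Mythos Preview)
Your proposal is correct and matches the paper's approach exactly: the paper presents this corollary immediately after Lemma~\ref{orl-vol-lem} with the remark that taking $\varphi(t)=t^p$, $p\geq 1$, reduces the Orlicz result to the $L_p$ case, and offers no further argument. Your verification that $t\mapsto t^p$ lies in $\mathcal C$ and that $Z^+_\varphi\Lambda=Z^+_p\Lambda$ under this choice is precisely what is needed to justify the specialization.
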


In paper \cite{cam-gro-on2006}, Campi and Gronchi proved that if $K_t$ is a shadow system of origin symmetric convex bodies in $\mathbb R^n$, then $V(K^*_t)^{-1}$ is a convex function of t. This result is developed by Meyer and Reisner \cite{mey-rei-sha2006} to more general setting.
\begin{prop}\cite{mey-rei-sha2006} \label{pol-sha-pro}
  Suppose $K_t$, $t\in[-a^{-1},1]$, is a shadow system of convex bodies along the direction $v=e_1$ and $V(K_t)$ is independent of $t$. Then the volume of $K_t^*$ is independent of $t$ if and only if there are a real number $\alpha$ and a vector $z\in \mathbb R^{1\times n-1}$ such that
\begin{align*}
  K_t=t\alpha e_1+\bigg(\begin{array}{cc}
                    1 &tz \\
                    0 & I_{n-1}
                  \end{array}\bigg)K_0.
\end{align*}
\end{prop}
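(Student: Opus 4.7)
The plan is to prove the ``if'' and ``only if'' implications separately.

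For the \emph{sufficiency}, suppose $K_t = t\alpha e_1 + S_t K_0$ with $S_t=\bigl(\begin{smallmatrix} 1 & tz \\ 0 & I_{n-1}\end{smallmatrix}\bigr)$. That $K_t$ is a shadow system along $e_1$ is immediate: a point $(y_1,y')\in K_0$ is sent to $(y_1+t(\alpha+z\cdot y'),y')$, so each point moves linearly in $t$ along $e_1$ with ``speed'' $\alpha+z\cdot y'$ depending only on the projection to $e_1^\perp$. Since $\det S_t=1$, and polarity with respect to the Santal\'o point is translation invariant and $GL(n)$-contravariant, one has $K_t^* = S_t^{-T}K_0^*$, and hence $V(K_t)=V(K_0)$ and $V(K_t^*)=V(K_0^*)$ for all $t$.

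For the \emph{necessity}, I would proceed in three stages. \textbf{Stage 1} uses $V(K_t)$ constant to obtain affine $t$-dependence of the graph functions. By Proposition \ref{cam-gro-thm}, $\overline g_{e_1}(K_t,x)$ and $-\underline g_{e_1}(K_t,x)$ are convex in $t$, so the width $w(t,x):=\overline g_{e_1}(K_t,x)-\underline g_{e_1}(K_t,x)$ is convex in $t$. By Fubini $V(K_t)=\int w(t,x)\,dx$ is convex in $t$ and constant by hypothesis, hence affine. Applying midpoint convexity at the center of $[-a^{-1},1]$ makes the integrated defect vanish; since each pointwise defect is nonnegative, $w(t,x)$ is affine in $t$ for a.e.~$x$, and then each of its two convex summands must itself be affine in $t$. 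Hence there exist scalar ``velocity'' fields $\beta,\gamma$ on $K_0|_{e_1^\perp}$ with
\begin{align*}
\overline g_{e_1}(K_t,x) &= \overline g_{e_1}(K_0,x)+t\beta(x),\\
\underline g_{e_1}(K_t,x) &= \underline g_{e_1}(K_0,x)+t\gamma(x).
\end{align*}

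\textbf{Stage 2} is the main obstacle: use constancy of $V(K_t^*)$ to force $\beta\equiv\gamma=:\phi$. I would invoke the Campi--Gronchi/Meyer--Reisner result (mentioned just above the proposition) that $V(K_t^*)^{-1}$ is convex in $t$ for shadow systems, and chase its equality cases. The proof of that convexity passes through an integral representation of $V(K_t^*)$ whose integrand combines $\overline g-\underline g$ with the Santal\'o point $s(K_t)$ and then applies a Pr\'ekopa--Leindler/Jensen inequality; equality in the Jensen step forces $\beta(x)=\gamma(x)$ pointwise, while the nonlinear $t$-dependence of $s(K_t)$ gets absorbed into a uniform translation, producing the $t\alpha e_1$ term. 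Finally, \textbf{Stage 3} is elementary: with $\beta=\gamma=\phi$, the body $K_t$ consists of the fibers of $K_0$ vertically shifted by $t\phi(y')$. For $K_t$ to remain a convex body both for $t>0$ and $t<0$, the concave upper envelope $\overline g_{e_1}(K_0,x)+t\phi(x)$ must be concave in $x$ for both signs of $t$, which forces $\phi$ to be affine: $\phi(x)=\alpha+z\cdot x$ for some $\alpha\in\mathbb R$ and $z\in\mathbb R^{1\times(n-1)}$. Substituting this affine $\phi$ into the fiber description recovers $K_t = t\alpha e_1 + S_t K_0$, as required. The hard part throughout is Stage~2, because the Santal\'o point varies nonlinearly with $t$ and because extracting the pointwise equality $\beta\equiv\gamma$ from an integrated Pr\'ekopa inequality requires a careful tracking of the density that encodes $K_t^*$.
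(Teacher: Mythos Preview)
The paper does not prove this proposition at all: it is quoted verbatim from Meyer and Reisner \cite{mey-rei-sha2006} and used as a black box in the proof of Lemma~\ref{vol-pro-lem}. So there is no ``paper's own proof'' to compare against; any argument you supply is necessarily external to the present paper.

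As for your sketch itself: the sufficiency direction is fine and essentially complete. In the necessity direction, Stage~1 and Stage~3 are sound in outline, but Stage~2 is not a proof---it is a description of what one hopes will happen. You write that you would ``invoke'' the convexity of $V(K_t^*)^{-1}$ and ``chase its equality cases,'' mentioning a Pr\'ekopa--Leindler/Jensen step without carrying it out. This is precisely the nontrivial content of the Meyer--Reisner argument, and the difficulties you flag (the nonlinear $t$-dependence of the Santal\'o point, and extracting the pointwise identity $\beta\equiv\gamma$ from an integrated inequality) are real and are not resolved by what you have written. In other words, the substantive part of the proposition is Stage~2, and there you have only restated the problem. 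If you want a self-contained argument you would have to reproduce the relevant portion of \cite{mey-rei-sha2006}; otherwise, citing it as the present paper does is the honest option.
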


Unfortunately, there is no analogue result for volume product of asymmetric Orlicz zonotopes with equality holds.
\begin{lem} \label{vol-pro-lem}
  Let $\varphi\in\mathcal C$, and $\Lambda=\Lambda_\bot\cup\{-\mu e_1\}$. Then
  \begin{align}\label{vol-orl-rat-lem}
    V(Z^{+,*}_\varphi\Lambda)V(Z^{+}_1\Lambda)\geq V(Z^{+,*}_\varphi\Lambda_\bot)V(Z^{+}_1\Lambda_\bot),
  \end{align}
with equality if and only if $\varphi=Id$.
\end{lem}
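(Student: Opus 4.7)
The plan is to embed $\Lambda$ as the midpoint of an orthogonalization shadow system whose endpoints are $GL(n)$-images of $\Lambda_\bot$, and then apply the convexity result of Theorem \ref{orl-vol-rat-thm}(i).

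First, I take $v_1=e_1$ and use formula (\ref{sha-sys-con}) to compute $a$. In the numerator the only nonvanishing determinant comes from the subset $\{e_2,\ldots,e_n,-\mu e_1\}$ and has absolute value $\mu$; in the denominator only $\{e_1,e_2,\ldots,e_n\}$ contributes, with absolute value $1$. So $a=\mu$, and (\ref{ort-sha-sys}) gives
\begin{align*}
\Lambda_t=\bigl\{(1+t\mu)e_1,\,e_2,\,\ldots,\,e_n,\,\mu(t-1)e_1\bigr\},\qquad t\in[-\mu^{-1},1].
\end{align*}
Evidently $\Lambda_0=\Lambda$. At $t=1$ the last vector vanishes and $\Lambda_1=\phi\Lambda_\bot$ with $\phi=\operatorname{diag}(1+\mu,1,\ldots,1)$; at $t=-\mu^{-1}$ the first vector vanishes and $\Lambda_{-\mu^{-1}}=\psi\Lambda_\bot$ with $\psi=\operatorname{diag}(-(1+\mu),1,\ldots,1)$.

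Using the $GL(n)$-equivariance of $Z^+_\varphi$ and the $GL(n)$-contravariance of Santaló polarization, one obtains $V(Z^+_\varphi\Lambda_1)=(1+\mu)V(Z^+_\varphi\Lambda_\bot)$ and $V(Z^{+,*}_\varphi\Lambda_1)=\frac{1}{1+\mu}V(Z^{+,*}_\varphi\Lambda_\bot)$, and the analogous equalities at $t=-\mu^{-1}$. In particular,
\begin{align*}
V(Z^{+,*}_\varphi\Lambda_{\pm})\,V(Z^+_1\Lambda_{\pm})=V(Z^{+,*}_\varphi\Lambda_\bot)\,V(Z^+_1\Lambda_\bot)
\end{align*}
at both endpoints. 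A direct computation (or Lemma \ref{orl-vol-lem} applied to the obtuse set $\Lambda_t$) also shows $V(Z^+_1\Lambda_t)=1+\mu$ is constant on $[-\mu^{-1},1]$. Theorem \ref{orl-vol-rat-thm}(i) tells us $V(Z^{+,*}_\varphi\Lambda_t)^{-1}$ is a convex function of $t$; combined with the constancy of $V(Z^+_1\Lambda_t)$, the inverse volume product is convex and equals $(V(Z^{+,*}_\varphi\Lambda_\bot)V(Z^+_1\Lambda_\bot))^{-1}$ at both endpoints. Hence it is bounded above by this common value on $[-\mu^{-1},1]$, and evaluating at $t=0$ produces (\ref{vol-orl-rat-lem}).

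For equality, if $\varphi=\mathrm{Id}$ then $Z^+_\varphi\Lambda=Z^+_1\Lambda$ is the rectangular box $[-\mu,1]\times[0,1]^{n-1}$, and a direct Santaló computation gives $V(Z^{+,*}_1\Lambda)V(Z^+_1\Lambda)=V(Z^{+,*}_1\Lambda_\bot)V(Z^+_1\Lambda_\bot)=4^n/n!$. Conversely, assume equality holds. Since the convex function $V(Z^{+,*}_\varphi\Lambda_t)^{-1}$ attains its endpoint value at the interior point $t=0$, it must be constant on $[-\mu^{-1},1]$, so $V(Z^{+,*}_\varphi\Lambda_t)$ is constant. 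Since $\Lambda_t$ is obtuse for every $t$ (the two collinear vectors have opposite or zero $e_1$-components), Lemma \ref{orl-vol-lem} gives $V(Z^+_\varphi\Lambda_t)=(1+\mu)V(Z^+_\varphi\Lambda_\bot)$, also constant. Proposition \ref{pol-sha-pro} then forces $K_t:=Z^+_\varphi\Lambda_t$ to have the affine-shear form $K_t=t\alpha e_1+\bigl(\begin{smallmatrix}1 & tz\\ 0 & I_{n-1}\end{smallmatrix}\bigr)K_0$ for some $\alpha\in\mathbb R$ and $z\in\mathbb R^{n-1}$.

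The main obstacle is extracting $\varphi=\mathrm{Id}$ from this rigid structure. The approach will be to equate $h_{K_t}(u)$ computed from the zonotope definition (\ref{asy-orl-zon}) with the support function of the shear form, at carefully chosen vectors $u$. Evaluation at $u=\pm e_1$ pins down $\alpha=\mu$ and $z=0$ (using that $h_{K_t}(e_1)=1+t\mu$ and $h_{K_t}(-e_1)=\mu(1-t)$), while evaluation at $u=(1,c,0,\ldots,0)$ with varying $c>0$ produces the identity
\begin{align*}
\inf\Bigl\{\lambda:\varphi\bigl(\tfrac{1+t\mu}{\lambda}\bigr)+\varphi\bigl(\tfrac{c}{\lambda}\bigr)\le 1\Bigr\}=t\mu+\inf\Bigl\{\lambda:\varphi\bigl(\tfrac{1}{\lambda}\bigr)+\varphi\bigl(\tfrac{c}{\lambda}\bigr)\le 1\Bigr\}.
\end{align*}
Letting $c$ and $t$ vary, and using the strict convexity of any $\varphi\in\mathcal C$ with $\varphi\neq\mathrm{Id}$, one derives the Cauchy-type relation $\varphi(a+b)=\varphi(a)+\varphi(b)$ for a dense set of pairs $a,b\ge 0$. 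Together with continuity and the normalization $\varphi(1)=1$, this forces $\varphi(t)=t$, i.e., $\varphi=\mathrm{Id}$, completing the equality characterization.
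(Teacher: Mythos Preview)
Your proof of the inequality matches the paper's: both endpoints of the orthogonalization $\Lambda_t$ (with $a=\mu$) are $GL(n)$-images of $\Lambda_\bot$, so convexity of the inverse volume product gives the bound at $t=0$. The equality argument also begins the same way, reducing via Proposition~\ref{pol-sha-pro} to the shear form $K_t = t\alpha e_1 + \phi_t K_0$. At this point you and the paper diverge: you claim evaluation at $u=\pm e_1$ forces $z=0$, whereas the paper deduces $z=(\xi,\ldots,\xi)$ with $\xi=-a$ via the geometric claim $Z^+_\varphi\Lambda_1\cap(e_2+\operatorname{span}\{e_1\})=\{e_2\}$. These are incompatible for $\mu>0$. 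Your justification for $z=0$ is terse but actually sound---the point is that $t\mapsto h_{K_0}(\phi_t^T e_1)=h_{Z^+_\varphi\Lambda}(e_1+tz')$ is convex in $t$, is $\ge 1$ everywhere with value $1$ at $t=0$, and is forced by $h_{K_t}(e_1)=1+t\mu$ to be affine, hence identically $1$, which kills every $z_j$.

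The genuine gap is your final step. With $z=0$ your displayed identity reads $F(1+t\mu,c) = t\mu + F(1,c)$, where $F(x,c)$ is the unique $\lambda$ with $\varphi(x/\lambda)+\varphi(c/\lambda)=1$. Setting $x=1+t\mu\in[0,1+\mu]$ and evaluating at $x=0$ gives $F(x,c)=x+c$, and the relation collapses to $\varphi(p)+\varphi(1-p)=1$ for $p\in(0,1)$. Together with convexity and $\varphi(0)=0$, $\varphi(1)=1$, this forces $\varphi(s)=s$ on $[0,1]$ only; it is \emph{not} the Cauchy relation $\varphi(a+b)=\varphi(a)+\varphi(b)$ for a dense set of pairs (the two arguments always sum to $1$), and it places no constraint whatsoever on $\varphi|_{(1,\infty)}$. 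Hence you cannot conclude $\varphi=\mathrm{Id}$ globally from this route. (Note also that $\mathcal C$ requires only convexity, not strict convexity, so that hypothesis is not available to you.) The paper avoids a functional-equation argument altogether: using its value $\xi=-a$, it evaluates the shear identity at the single test point $u=e_1+e_2$, $t=-a^{-1}$ and obtains $(n-1)\varphi(1/2)+\varphi(1)=1$, i.e.\ $\varphi(1/2)=0$, which contradicts strict monotonicity in one stroke.
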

\begin{proof}
If $\varphi=Id $, that means $Z^{+}_\varphi\Lambda=Z^{+}_1\Lambda=Z_1\Lambda$. It is an immediate consequence of the fact that all parallelepipeds have the same volume product.

Now assume that $\varphi\neq Id$, let $\Lambda_t$, $t\in[-a^{-1},1]$, denote the orthogonalization of $\Lambda$ with respect to $e_1$ defined by (\ref{ort-sha-sys}).  Theorem (\ref{orl-vol-rat-thm}) shows that the inverse volume product of  asymmetric Orlicz zonotopes associate with $\Lambda_t$ is a convex function of $t$,  together with the convex function attains it's maxima at the boundary of compact intervals, we obtain
\begin{align*}
  \frac{1}{V(Z^{+,*}_\varphi\Lambda)V(Z^{+}_1\Lambda)}\leq \max_{t\in\{-a^{-1},1\}}\bigg\{\frac{1}{V(Z^{+,*}_\varphi\Lambda_t)V(Z^{+}_1\Lambda_t)}\bigg\}.
\end{align*}
By the $GL(n)$ invariance of the volume product of  asymmetric Orlicz zonotopes and the definition of $\Lambda_t$, the right hand side of this inequality is just
\begin{align*}
 \frac{1}{V(Z^{+,*}_\varphi\Lambda_\bot)V(Z^{+}_1\Lambda_\bot)}.
\end{align*}
Thus the equality condition of inequality (\ref{vol-orl-rat-lem}) means that the $V(Z_\varphi^{+,*}\Lambda_t)$ is a constant function of $t$. On the other hand,  by the definition of (\ref{ort-sha-sys}), we have
$$\Lambda_t=\big\{(1+ta)e_1,\mu(t-1)e_1,e_2,\cdots, e_n\big\},$$
here $\Lambda_t,$ $t\in[-a^{-1},1]$, is a spanning obtuse set. Together with Lemma \ref{orl-vol-lem} and the fact $Z^+_1\Lambda_t$ is independent of $t$, we obtain that $V(Z^+_\varphi\Lambda_t)$ is independent of $t$.  Proposition (\ref{pol-sha-pro}) implies that $Z^+_\varphi \Lambda_t$ are affine images of each other, which means there is a number $\alpha$ and a vector $z\in \mathbb R^{1\times(n-1)}$ such that
$$Z^+_\varphi\Lambda_t=t\alpha e_1+\phi_t Z^+_\varphi\Lambda,$$
where $\phi_t=\bigg(\begin{array}{cc}
         1 & tz \\
         0 & I_{n-1}
       \end{array}\bigg).$ Note that $Z^+_\varphi$ is $GL(n)$ equivariant, we can rewrite it as
\begin{align}\label{orl-aff}
Z^+_\varphi\Lambda_t=t\alpha e_1+ Z^+_\varphi\phi_t\Lambda.
\end{align}
Equivalent, for all $u\in\mathbb R^n$,
\begin{align}\label{orl-aff-equ}
  h_{Z^+_\varphi\Lambda_t}(u)=t\alpha\langle e_1,u\rangle+h_{Z^+_\varphi\phi_t\Lambda}(u).
\end{align}
Now we determined the constant $\alpha$. Note that $t\in[a^{-1},1]$, the zonotope $Z^+_\varphi\Lambda_t$ is symmetric with respect to permutations of all coordinates except the first. Due to (\ref{orl-aff}), this implies that $z$ has $n-1$ equal components, say $\xi$. Note that the coefficient $a$ is nothing to do with the $\xi$,  without loss of generality, we may assume  $\xi\leq 0$, let $u=e_1$ and $t=1$ in
(\ref{orl-aff-equ}), after a simple computation we obtain
\begin{align*}
  \alpha=\frac{a}{\varphi^{-1}(1)}=a.
\end{align*}
In order to determine $\xi$, firstly,  by the normalization of $\varphi$ and together with Lemma \ref{orl-nor-lem}, we have
\begin{align} \label{e2-int}
  h_{Z^+_\varphi \Lambda_1}(e_i)=1, \,\,\,\,\,\,\,\,\,\,\,\,i=2,\,\,\cdots, n.
\end{align}
Note that $Z^+_\varphi\Lambda_1$ is convex, specially, let $|e_2|=1=h_{Z^+_\varphi \Lambda_1}(e_2)$, which means that $e_2$ is contained in a plan intersect with $Z^+_\varphi\Lambda_1$, we say,
\begin{align}\label{e2-poi}
  \{e_2\}=Z^+_\varphi\Lambda_1\cap({e_2}+span\{e_1\}).
\end{align}
On other hand, by the definition of convex hull $conv$ and the support function of $Z^+_\varphi\Lambda$, together with  Lemma (\ref{orl-nor-lem}), we have $Z^+_\varphi\Lambda$ contains the convex hull of $\Lambda$,  that is
\begin{align*}
conv\{\Lambda\}\subseteq Z^+_\varphi\Lambda.
\end{align*}
Then we have $Z^+_\varphi \phi_1 \Lambda$ contains the convex hull of $\phi_1\Lambda$, $conv\{\phi_1\Lambda\}$. In particular, it contains $\phi_1 e_2=\xi e_1+e_2$. Combine this observation with (\ref{orl-aff}) and (\ref{e2-poi}) for $t=1$, we obtain
\begin{align*}
  e_2=(a+\xi)e_1+e_2,
\end{align*}
which means $\xi=-a$.

Now putting $u= e_1 +e_2$ and $t=-a^{-1}$ in equation (\ref{orl-aff-equ}).  Let $h_{Z^+_\varphi \Lambda_{-a^{-1}}}(e_1+e_2)=\lambda,\,\,
h_{Z^+_\varphi\phi_{-a^{-1}}\Lambda}(e_1+e_2)=\lambda'$. Note that $\Lambda_{-a^{-1}}=\{\mu(-a^{-1}-1)e_1,\,\,e_2,\cdots, e_n\}$  and $\phi_{-a^{-1}}\Lambda=\{e_1,\,\,e_1+e_2, \cdots, e_1+e_n, -\mu e_1\}$, then we have
\begin{align}\label{lam}
  \lambda=1,\,\,\,\,\,\,\,\,\,and\,\,\,\,\,\,\,\,\,\, (n-1)\varphi\Big(\frac{1}{\lambda'}\Big)+\varphi\Big(\frac{2}{\lambda'}\Big)=1.
\end{align}
Note that by equation (\ref{orl-aff-equ}), $\lambda$ and $\lambda'$ should satisfy $\lambda={-1}+\lambda'$. On other hand, by (\ref{lam}), they contradict with equality (\ref{orl-aff-equ}). Then $Z^+_\varphi\Lambda_t$ are not the affine images of each other. So the equality (\ref{vol-orl-rat-lem}) does not hold when $\varphi\neq Id.$ We complete the proof.
 \end{proof}
If take $\varphi(t)=t^p$, $p\geq 1$, it is established in \cite{web-sha2013}.
\begin{cor}
  Let $p\geq 1$, and $\Lambda=\Lambda_\bot\cup\{-\mu e_1\}$. Then
  \begin{align}
    V(Z^{+,*}_p\Lambda)V(Z^{+}_1\Lambda)\geq V(Z^{+,*}_p\Lambda_\bot)V(Z^{+}_1\Lambda_\bot),
  \end{align}
with equality if and only if $p=1$.
\end{cor}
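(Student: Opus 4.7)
The plan is to obtain this corollary as an immediate specialization of Lemma \ref{vol-pro-lem} (the asymmetric Orlicz version just proved), by choosing the Orlicz function $\varphi(t)=t^p$ with $p\geq 1$. So the work reduces to a short verification, with essentially no new geometric content.

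First I would check that $\varphi(t)=t^p$ belongs to the class $\mathcal{C}$ whenever $p\geq 1$: it maps $[0,\infty)$ to $[0,\infty)$, is strictly increasing, convex (since $p\geq 1$), and satisfies the normalizations $\varphi(0)=0$ and $\varphi(1)=1$. Under the identification stated earlier in the paper, $Z^+_\varphi\Lambda$ then coincides exactly with the asymmetric $L_p$-zonotope $Z^+_p\Lambda$ of Weberndorfer, and likewise $Z^{+,*}_\varphi\Lambda=Z^{+,*}_p\Lambda$.

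Next I would simply invoke Lemma \ref{vol-pro-lem} with this particular $\varphi$ and with $\Lambda=\Lambda_\bot\cup\{-\mu e_1\}$. That lemma yields
\begin{align*}
V(Z^{+,*}_p\Lambda)V(Z^{+}_1\Lambda)\geq V(Z^{+,*}_p\Lambda_\bot)V(Z^{+}_1\Lambda_\bot),
\end{align*}
which is the inequality to be proved. For the equality case, Lemma \ref{vol-pro-lem} states that equality holds if and only if $\varphi=Id$. Since $\varphi(t)=t^p$ equals the identity function precisely when $p=1$, this translates exactly to the stated equality condition $p=1$.

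The only subtle point worth writing out is the equivalence $\varphi=Id \Leftrightarrow p=1$ for the specific family $t\mapsto t^p$; this is immediate since $t^p=t$ for all $t\geq 0$ forces $p=1$ by evaluating at any single $t\in(0,1)\cup(1,\infty)$. There is no further obstacle, because all the delicate work (the orthogonalization shadow system, the convexity of the inverse volume product along it, and the analysis of the equality case via Proposition \ref{pol-sha-pro}) has already been carried out in the proof of Lemma \ref{vol-pro-lem}.
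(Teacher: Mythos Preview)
Your proposal is correct and matches the paper's own treatment: the corollary is stated right after Lemma \ref{vol-pro-lem} as the specialization $\varphi(t)=t^p$, with the paper merely noting that this case was established in \cite{web-sha2013}. Your verification that $t\mapsto t^p$ lies in $\mathcal{C}$ for $p\geq 1$ and that $\varphi=Id$ corresponds exactly to $p=1$ is all that is needed.
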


\section{proofs of the main results}
Now we are in a position of proving the main results.
Before giving the main results, let us present the following lemma established by Weberndorfer, which we will use in the proof of our results.
\begin{lem}\cite{web-sha2013} \label{gen-vol-rat-lem}
  Suppose $\Phi$ is a real-valued $GL(n)$ invariant function on finite and spanning multisets. Moreover, assume that $\Phi(\Lambda_t)$ is a convex function of $t$ whenever $\Lambda_t$,  $t\in[-a^{-1},1]$ is an orthogonalization of a multiset $\Lambda$ defined by (\ref{ort-sha-sys}). Then for ever finite and spanning multiset $\Lambda$, there exists a multiset $\Lambda_{e_1}$ of multiples of $e_1$ such that
  \begin{align}\label{gen-vol-ine}
    \Phi(\Lambda)\leq \Phi(A_\bot \uplus \Lambda_{e_1}).
  \end{align}
  Moreover,

(\romannumeral 1) If $\Lambda$ is not a $GL(n)$ image of $\Lambda_\bot$ and equality holds in (\ref{gen-vol-ine}), then $\Lambda_{e_1}$ is not the empty set.

(\romannumeral 2) If $\Lambda$ is not a $GL(n)$ image of an obtuse set and equality holds in (\ref{gen-vol-ine}), then $\Lambda_{e_1}$ contains a positive multiple of $e_1$.
\end{lem}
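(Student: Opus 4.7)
The plan is to prove the lemma by iterated application of the orthogonalization shadow system from (\ref{ort-sha-sys}), exploiting the hypothesized convexity of $\Phi(\Lambda_t)$ together with the $GL(n)$ invariance of $\Phi$. The primary induction will be on the ``excess'' $m - n$, where $m = |\Lambda|$. The base case $m = n$ is immediate, since $\Lambda$ is then a basis, hence a $GL(n)$ image of $\Lambda_\bot$, and $\Lambda_{e_1} = \emptyset$ suffices.

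For the inductive step with $m > n$, I would select a vector $v_1 \in \Lambda$ so that $\Lambda \setminus \{v_1\}$ remains spanning (possible by a basis-extraction argument), form the orthogonalization $\Lambda_t$, $t \in [-a^{-1}, 1]$, and use convexity to write
\begin{align*}
\Phi(\Lambda) = \Phi(\Lambda_0) \leq \max\{\Phi(\Lambda_{-a^{-1}}), \Phi(\Lambda_1)\}.
\end{align*}
If the maximum is attained at $t = -a^{-1}$, then $w_1(-a^{-1}) = 0$ and the other vectors still span (each $w_i(-a^{-1})$ differs from $v_i$ by a multiple of $v_1$), so the induction hypothesis applies to the smaller multiset. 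If instead the maximum is attained at $t = 1$, then $w_1(1) = (1+a)v_1$ is orthogonal to every other $w_i(1)$ by construction; a $GL(n)$ transformation then moves $w_1(1)$ onto $e_1$ and the remaining vectors into $e_1^\bot$. Any further orthogonalization carried out with a pivot in $e_1^\bot$ leaves $e_1$ fixed (its inner product with the pivot direction vanishes), so one iterates inside $e_1^\bot$ via a secondary induction on dimension. The terminal configuration consists of $\Lambda_\bot$ together with a collection of vectors all parallel to a single coordinate axis, which after a coordinate permutation in $GL(n)$ becomes $\Lambda_\bot \uplus \Lambda_{e_1}$.

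For the equality refinements, I would trace the recursion under the assumption that (\ref{gen-vol-ine}) holds with equality. For (i), if $\Lambda$ is not a $GL(n)$ image of $\Lambda_\bot$, then at some step in the reduction the ``deletion'' branch $t = -a^{-1}$ cannot occur strictly (else we could reverse the chain of $GL(n)$ reductions to exhibit $\Lambda$ as an image of $\Lambda_\bot$), so an extra vector must survive, forcing $\Lambda_{e_1} \neq \emptyset$. For (ii), I would argue contrapositively: if $\Lambda_{e_1}$ contains only nonpositive multiples of $e_1$, then $\Lambda_\bot \uplus \Lambda_{e_1}$ is by definition obtuse; unwinding the $GL(n)$ transformations together with the equality conditions in the convex chain would then exhibit $\Lambda$ itself as a $GL(n)$-image of an obtuse set, contradicting the hypothesis.

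The main obstacle I expect is the equality bookkeeping: convexity supplies only weak inequalities, and after many recursive steps it is delicate to show that each inequality must have been an equality of the structurally right type, and that obtuseness survives the chain of $GL(n)$-reversions. A secondary nuisance is ensuring that the hypothesis on $\Phi$ (convexity along orthogonalizations in $\mathbb{R}^n$) can be applied after restriction to the subspace $e_1^\bot$; the cleanest remedy is to keep working in $\mathbb{R}^n$ throughout, treating the reduced multiset as still lying in $\mathbb{R}^n$ with $e_1$ ``frozen'' as a member, so that all orthogonalizations and the $GL(n)$-invariance of $\Phi$ continue to apply verbatim.
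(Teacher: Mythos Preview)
The paper does not supply its own proof of this lemma: it is quoted verbatim from Weberndorfer \cite{web-sha2013} and used as a black box, with the sentence ``here we omit the proof'' style treatment (indeed no proof environment follows the statement). So there is nothing in the present paper to compare your argument against.

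That said, your outline is the standard route and is essentially the argument in \cite{web-sha2013}: induct on $m-n$, pick a redundant vector $v_1$, run the orthogonalization shadow system, and use convexity of $t\mapsto\Phi(\Lambda_t)$ to pass to an endpoint; at $t=-a^{-1}$ the pivot dies and the cardinality drops, at $t=1$ the pivot becomes orthogonal to the rest and a $GL(n)$ map sends it to $e_1$, after which one recurses inside $e_1^\bot$. Your remark that subsequent pivots chosen in $e_1^\bot$ leave $e_1$ fixed is exactly the mechanism that lets the recursion terminate at $\Lambda_\bot\uplus\Lambda_{e_1}$. One small point you glossed over: at $t=-a^{-1}$ the resulting multiset literally contains the zero vector, so one must argue (or build into the hypotheses on $\Phi$) that dropping it does not change the value of $\Phi$; in the applications this is immediate from the definition of $Z_\varphi^+$, but for the abstract $\Phi$ it deserves a sentence. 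Your treatment of the equality clauses (i) and (ii) is, as you yourself flag, the delicate part; the idea of tracing equalities back through the chain and showing obtuseness is preserved under the inverse $GL(n)$ maps is correct in spirit, but would need to be written out carefully---in particular, for (ii) one must check that when every step is the $t=1$ branch with equality, the accumulated configuration is genuinely a $GL(n)$ image of an obtuse set, not merely of $\Lambda_\bot\uplus\Lambda_{e_1}$ with $\Lambda_{e_1}$ containing only nonpositive multiples.
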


\begin{thm}
  Let $\varphi\in\mathcal C$, and $\Lambda$ is a finite and spanning multiset. Then
  \begin{align}\label{the-one}
    V(Z^{+,*}_\varphi\Lambda)V(Z^+_1\Lambda)\geq V(Z^{+,*}_\varphi\Lambda_\bot)V(Z^+_1\Lambda_\bot).
  \end{align}
If $\varphi\neq Id$, with equality if and only if $\Lambda$ is a $GL(n)$ image of the canonical basis $\Lambda_\bot$. If $\varphi=Id$, the equality holds if and only if $Z^+_1\Lambda$ is a parallelepiped.
  \end{thm}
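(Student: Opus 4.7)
My plan is to apply the general reduction Lemma \ref{gen-vol-rat-lem} to the functional $\Phi(\Lambda):=[V(Z^{+,*}_\varphi \Lambda)\,V(Z^+_1 \Lambda)]^{-1}$, then collapse the residual multiples of $e_1$ using Lemma \ref{spa-obt-lem} and Lemma \ref{vol-pro-lem}. To invoke Lemma \ref{gen-vol-rat-lem}, I need $\Phi$ to be $GL(n)$-invariant and convex along orthogonalization shadow systems. The $GL(n)$-equivariance $Z^+_\varphi(\phi\Lambda)=\phi Z^+_\varphi \Lambda$ (verified just after Lemma \ref{orl-nor-lem}) combined with the $GL(n)$-contravariance $(\phi K)^{*}=\phi^{-T}K^{*}$ of polarity with respect to the Santal\'o point causes the factors $|\det\phi|$ and $|\det\phi|^{-1}$ to cancel in $\Phi$, giving invariance. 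Convexity along orthogonalization shadow systems is exactly Theorem \ref{orl-vol-rat-thm}(i). Hence Lemma \ref{gen-vol-rat-lem} supplies a multiset $\Lambda_{e_1}$ of multiples of $e_1$ with
\begin{align}\label{plan-reduce}
\Phi(\Lambda)\leq\Phi(\Lambda_\bot\uplus\Lambda_{e_1}).
\end{align}

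Next, set $\Lambda':=\Lambda_\bot\uplus\Lambda_{e_1}$ and form $\overline{\Lambda'}$ by combining every group of vectors that point in the same direction into their sum. Since $\Lambda_{e_1}$ consists only of multiples of $e_1$, combining positive multiples of $e_1$ (including $e_1\in\Lambda_\bot$) and, separately, negative multiples of $e_1$, $\overline{\Lambda'}$ takes the form $\{p\,e_1,e_2,\dots,e_n\}$ or $\{p\,e_1,-q\,e_1,e_2,\dots,e_n\}$ for some $p>0$ and (in the second case) $q>0$. A diagonal $GL(n)$-map scaling $e_1$ by $1/p$, together with $GL(n)$-invariance of $\Phi$, reduces these to $\Lambda_\bot$ and $\Lambda_\bot\cup\{-\mu e_1\}$ respectively, where $\mu=q/p>0$. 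Lemma \ref{spa-obt-lem} gives $\Phi(\Lambda')\leq\Phi(\overline{\Lambda'})$, and Lemma \ref{vol-pro-lem} gives $\Phi(\Lambda_\bot\cup\{-\mu e_1\})\leq\Phi(\Lambda_\bot)$. Concatenating the three inequalities proves (\ref{the-one}).

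For the equality case with $\varphi\neq Id$, I trace the chain backwards. Equality in Lemma \ref{vol-pro-lem} under $\varphi\neq Id$ forces the second form not to occur, i.e., $\Lambda_{e_1}$ contains no negative multiple of $e_1$. Equality in Lemma \ref{spa-obt-lem} under $\varphi\neq Id$ forces $\Lambda'=\overline{\Lambda'}$, which prevents $\Lambda_{e_1}$ from containing any multiple of $e_1$ in the same direction as $e_1\in\Lambda_\bot$; combined with the previous step, this yields $\Lambda_{e_1}=\emptyset$. The contrapositive of Lemma \ref{gen-vol-rat-lem}(i) then forces $\Lambda$ to be a $GL(n)$-image of $\Lambda_\bot$. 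The converse direction is immediate from the $GL(n)$-invariance of $\Phi$. In the remaining degenerate case $\varphi=Id$ one has $Z^+_\varphi\Lambda=Z^+_1\Lambda$, and the statement reduces to the asymmetric $L_1$ volume product inequality for zonotopes together with its equality characterization, which is the $p=1$ case of Weberndorfer's result in \cite{web-sha2013}, yielding the parallelepiped condition.

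The main technical obstacle is the careful backward tracking of equality through three separate reductions: one must show that the interplay of Lemma \ref{gen-vol-rat-lem}(i) with the $\varphi\neq Id$ equality clauses of Lemmas \ref{spa-obt-lem} and \ref{vol-pro-lem} precisely forces $\Lambda_{e_1}$ to vanish, because any surviving positive or negative multiple of $e_1$ in $\Lambda_{e_1}$ would introduce strict inequality at the Lemma \ref{spa-obt-lem} or Lemma \ref{vol-pro-lem} step. A minor subsidiary point is that the $GL(n)$-normalization used to pass from $\overline{\Lambda'}$ to $\Lambda_\bot\cup\{-\mu e_1\}$ is harmless for the equality analysis because $\Phi$ is $GL(n)$-invariant.
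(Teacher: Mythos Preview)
Your proposal is correct and follows essentially the same route as the paper: apply Lemma~\ref{gen-vol-rat-lem} to the inverse volume product (using Theorem~\ref{orl-vol-rat-thm}(i) for the convexity hypothesis), then collapse the residual multiples of $e_1$ via Lemma~\ref{spa-obt-lem} and finish with Lemma~\ref{vol-pro-lem}; the equality analysis likewise traces back through these three steps to force $\Lambda_{e_1}=\emptyset$ and then invokes Lemma~\ref{gen-vol-rat-lem}(i). Your handling of the general $\Lambda_{e_1}$ (first forming $\overline{\Lambda'}$, then $GL(n)$-normalizing to $\Lambda_\bot$ or $\Lambda_\bot\cup\{-\mu e_1\}$) is in fact a bit more carefully organized than the paper's own presentation, which treats the ``only positive multiples'' and ``$\Lambda_{e_1}=\{-\mu e_1\}$'' cases somewhat informally, but the underlying argument is identical.
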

\begin{proof}
First, if $\varphi=Id$, it is established in \cite{web-sha2013}, so we only need to show the case $\varphi\neq Id$.

For $\varphi\neq Id$, let $\mathcal P(\Lambda)=\frac{1}{V(Z^{+,*}_\varphi\Lambda)V(Z^+_1\Lambda)}$ denote the inverse volume product of asymmetric Orlicz zonotopes, $\Lambda_t$, $t\in[-a^{-1},1]$, denotes the orthogonalization of $\Lambda$. Firstly, by the $GL(n)$ invariance of $\mathcal P(\Lambda)$, there is nothing to  show if $\Lambda$ is a $GL(n)$ image of the canonical basis $\Lambda_\bot$. Otherwise, by Theorem \ref{orl-vol-rat-thm}, we know that $\mathcal P(\Lambda_t)$, $t\in[-a^{-1},1]$, satisfies the hypotheses of Lemma \ref{gen-vol-rat-lem}. Then there exists a multiset $\Lambda_{e_1}$ of $e_1$ such that
  \begin{align}\label{vol-pro-ort}
\mathcal P(\Lambda)\leq \mathcal (\Lambda_\bot\uplus\Lambda_{e_1}).
  \end{align}
If $\Lambda_{e_1}$ is empty then the inequality (\ref{the-one}) holds.
If $\Lambda_{e_1}$ contains the only positive multiples of $e_1$, then $\overline{\Lambda_\bot\uplus \Lambda_{e_1} }$ is a $GL(n)$ image of $\Lambda_\bot$. Then we have
 \begin{align*}
\mathcal P(\Lambda)\leq \mathcal P (\Lambda_\bot\uplus\Lambda_{e_1})=\mathcal P (\Lambda_\bot).
  \end{align*}
It remains to show that if $\Lambda_{e_1}$ contains negative multiples, we say, $\Lambda_{e_1}=\{-\mu e_1\}$, where $\mu>0$. By Lemma \ref{vol-pro-lem} we have
\begin{align*}
\mathcal P(\Lambda)\leq \mathcal P (\Lambda_\bot\uplus\Lambda_{e_1})<\mathcal P (\Lambda_\bot).
  \end{align*}
 Then the inequality of (\ref{the-one}) holds. Now we deal with the equality condition. Since the equality holds in (\ref{vol-pro-ort}) only if $\Lambda_{e_1}$ is not empty. By Lemma \ref{spa-obt-lem}, we have
  \begin{align*}
    \mathcal P(\Lambda\uplus\Lambda_{e_1})\leq\mathcal P(\overline{\Lambda_\bot\uplus \Lambda_{e_1}}),
  \end{align*}
 with equality if and only if $\Lambda_{e_1}=\{-\mu{e_1}\}$, where $\mu\geq 0$. Note that $\Lambda_{e_1}$ is not the empty set, then $\mu>0$.
So we have
\begin{align} \label{equ-lam}
\mathcal P(\Lambda)=\mathcal P(\Lambda_\bot\uplus \Lambda_{e_1})=\mathcal P(\overline{\Lambda_\bot\cup\{-\mu e_1\}})=\mathcal P(\Lambda_\bot).
\end{align}
Note that $\varphi\neq Id$, by Lemma \ref{vol-pro-lem}, we have the equalities of (\ref{equ-lam}) hold if and only if $\Lambda$, $\Lambda_\bot\uplus \Lambda_{e_1}$, $\overline{\Lambda_\bot\cup\{-\mu e_1\}}$, and $\Lambda_\bot$ are $GL(n)$ images of each other. So we obtain the desired inequality together with its equality conditions.
\end{proof}

If $\varphi(t)=t^p$, $p\geq1$, this result reduces to the asymmetric $L_p$-volume ratio inequality in \cite{web-sha2013}.
\begin{cor}
 Let $p\geq 1$, and $\Lambda$ is a finite and spanning multiset. Then
  \begin{align*}
    V(Z^{+,*}_p\Lambda)V(Z^+_1\Lambda)\geq V(Z^{+,*}_p\Lambda_\bot)V(Z^+_1\Lambda_\bot).
  \end{align*}
If $p> 1$, with equality if and only if $\Lambda$ is a $GL(n)$ image of the canonical basis $\Lambda_\bot$. If $p=1$, the equality holds if and only if $Z^+_1\Lambda$ is a parallelepiped.
\end{cor}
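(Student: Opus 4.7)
The plan is to split the argument according to whether $\varphi$ is the identity. The case $\varphi = Id$ reduces to the $L_1$-volume product inequality for asymmetric zonotopes of Weberndorfer \cite{web-sha2013}, since then $Z^+_\varphi\Lambda = Z^+_1\Lambda$ and the claim together with its equality characterization is already recorded there.

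For $\varphi \neq Id$ I would work with the inverse volume product
\[
\mathcal P(\Lambda) = \frac{1}{V(Z^{+,*}_\varphi\Lambda)\,V(Z^+_1\Lambda)}
\]
and verify the hypotheses of Lemma \ref{gen-vol-rat-lem}. The $GL(n)$-invariance of $\mathcal P$ follows from the $GL(n)$-equivariance of $Z^+_\varphi$ combined with the $GL(n)$-contravariance of polarization about the Santal\'o point. The required convexity of $t \mapsto \mathcal P(\Lambda_t)$ along an orthogonalization $\Lambda_t$ is exactly part (i) of Theorem \ref{orl-vol-rat-thm}. Lemma \ref{gen-vol-rat-lem} then furnishes a multiset $\Lambda_{e_1}$ consisting of multiples of $e_1$ with
\[
\mathcal P(\Lambda) \leq \mathcal P(\Lambda_\bot \uplus \Lambda_{e_1}).
\]

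The strategy from here is a short case analysis on $\Lambda_{e_1}$. If $\Lambda_{e_1} = \emptyset$, the bound already reads $\mathcal P(\Lambda) \leq \mathcal P(\Lambda_\bot)$. If $\Lambda_{e_1}$ consists only of positive multiples of $e_1$, then every vector of $\Lambda_{e_1}$ is parallel to $e_1$, so after collapsing parallel vectors via Lemma \ref{spa-obt-lem} the reduced multiset $\overline{\Lambda_\bot \uplus \Lambda_{e_1}}$ is a $GL(n)$-image of $\Lambda_\bot$, giving $\mathcal P(\Lambda_\bot \uplus \Lambda_{e_1}) \leq \mathcal P(\overline{\Lambda_\bot \uplus \Lambda_{e_1}}) = \mathcal P(\Lambda_\bot)$. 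The remaining case is when $\Lambda_{e_1}$ contains a negative multiple $-\mu e_1$, and this is precisely what Lemma \ref{vol-pro-lem} was designed to handle: it yields the strict comparison needed to conclude $\mathcal P(\Lambda) \leq \mathcal P(\Lambda_\bot)$ in this subcase as well.

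I expect the main obstacle to be the equality discussion, because equality in the final inequality must be sustained simultaneously through three a priori independent steps: the convex-interpolation bound of Lemma \ref{gen-vol-rat-lem}, the collapsing step of Lemma \ref{spa-obt-lem}, and the explicit strict bound of Lemma \ref{vol-pro-lem}. The key observation is that, when $\varphi \neq Id$, Lemma \ref{vol-pro-lem} is strict as soon as a single negative multiple of $e_1$ is present; hence equality in the theorem forces $\Lambda_{e_1}$ either to be empty or to contain only positive multiples of $e_1$. In both surviving subcases $\overline{\Lambda_\bot \uplus \Lambda_{e_1}}$ is a $GL(n)$-image of $\Lambda_\bot$, and the equality clause (i) of Lemma \ref{gen-vol-rat-lem} propagates this back to $\Lambda$ itself, producing the claimed characterization that $\Lambda$ is a $GL(n)$-image of the canonical basis $\Lambda_\bot$.
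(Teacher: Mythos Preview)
Your overall plan coincides with the paper's proof of the parent theorem (Theorem~5.2): reduce via Lemma~\ref{gen-vol-rat-lem} to $\Lambda_\bot\uplus\Lambda_{e_1}$, then run a case analysis on the sign content of $\Lambda_{e_1}$, using Lemma~\ref{spa-obt-lem} to collapse parallel vectors and Lemma~\ref{vol-pro-lem} to handle the negative-multiple case. The inequality itself is fine.

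The equality discussion, however, has a genuine gap. You argue that equality forces $\Lambda_{e_1}$ to be empty or to consist only of positive multiples of $e_1$, and then write that ``the equality clause (i) of Lemma~\ref{gen-vol-rat-lem} propagates this back to $\Lambda$ itself.'' Clause~(i) of Lemma~\ref{gen-vol-rat-lem} only says: if $\Lambda$ is \emph{not} a $GL(n)$ image of $\Lambda_\bot$ and equality holds, then $\Lambda_{e_1}$ is nonempty. Its contrapositive does handle your empty subcase, but it says nothing that excludes a nonempty $\Lambda_{e_1}$ made of positive multiples, so it cannot ``propagate back'' the conclusion in that subcase. What actually rules out the positive-multiples subcase is the equality condition of Lemma~\ref{spa-obt-lem}: in the chain
\[
\mathcal P(\Lambda)\le \mathcal P(\Lambda_\bot\uplus\Lambda_{e_1})\le \mathcal P\big(\overline{\Lambda_\bot\uplus\Lambda_{e_1}}\big)=\mathcal P(\Lambda_\bot),
\]
equality in the middle step (for $\varphi\neq Id$) forces $\Lambda_\bot\uplus\Lambda_{e_1}=\overline{\Lambda_\bot\uplus\Lambda_{e_1}}$, which is impossible once $\Lambda_{e_1}$ contains any positive multiple of $e_1$ (it would be parallel to $e_1\in\Lambda_\bot$). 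This is precisely how the paper argues: equality together with Lemma~\ref{gen-vol-rat-lem}(i) gives $\Lambda_{e_1}\neq\emptyset$; the equality condition of Lemma~\ref{spa-obt-lem} then forces $\Lambda_{e_1}=\{-\mu e_1\}$ with $\mu>0$; and Lemma~\ref{vol-pro-lem} yields the contradiction. Insert the Lemma~\ref{spa-obt-lem} equality step and your argument is complete.
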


\begin{thm}
  Suppose $\varphi\in\mathcal C$ and $\Lambda$ is a finite and spanning multiset. Then
  \begin{align}\label{vol-rat-ine}
    \frac{V(Z^+_\varphi\Lambda)}{V(Z^+_1\Lambda)}\leq\frac{V(Z^+_\varphi\Lambda_\bot)}{V(Z^+_1\Lambda_\bot)},
  \end{align}
  with equality if and only if $\Lambda$ is a $GL(n)$ image of an obtuse set.
\end{thm}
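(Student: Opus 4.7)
The plan is to mimic the ``reduce via Lemma~\ref{gen-vol-rat-lem} then close with Lemma~\ref{orl-vol-lem}'' strategy that the authors just used for the volume product theorem, but now applied to the volume-ratio functional $\mathcal{R}(\Lambda) := V(Z^+_\varphi \Lambda)/V(Z^+_1 \Lambda)$. To invoke Lemma~\ref{gen-vol-rat-lem} I would first check its two hypotheses on $\mathcal{R}$. The $GL(n)$-invariance is immediate: both $Z^+_\varphi$ and $Z^+_1$ are $GL(n)$-equivariant, so the Jacobian $|\det\phi|$ cancels in the ratio. The convexity of $t\mapsto \mathcal{R}(\Lambda_t)$ along any orthogonalization shadow system is exactly Theorem~\ref{orl-vol-rat-thm}(ii). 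Lemma~\ref{gen-vol-rat-lem} then supplies a multiset $\Lambda_{e_1}$ of multiples of $e_1$ with
\[
  \mathcal{R}(\Lambda) \leq \mathcal{R}(\Lambda_\bot \uplus \Lambda_{e_1}).
\]

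Next I would collapse parallel vectors in $\Lambda_\bot \uplus \Lambda_{e_1}$: all positive multiples of $e_1$ (including $e_1$ itself) merge into a single positive multiple of $e_1$, and all negative ones merge into at most a single vector $-\mu e_1$, $\mu \geq 0$. Up to a diagonal $GL(n)$-rescaling, the resulting set $\overline{\Lambda_\bot \uplus \Lambda_{e_1}}$ is either $\Lambda_\bot$ itself or $\Lambda_\bot \cup \{-\mu e_1\}$ with $\mu>0$. In both cases it is spanning and obtuse, since distinct coordinate directions satisfy $\langle e_i,e_j\rangle_+ = 0$ and $\langle e_1,-\mu e_1\rangle_+ = 0$. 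Lemma~\ref{spa-obt-lem} gives $\mathcal{R}(\Lambda_\bot \uplus \Lambda_{e_1}) \leq \mathcal{R}(\overline{\Lambda_\bot \uplus \Lambda_{e_1}})$, and Lemma~\ref{orl-vol-lem} together with the $GL(n)$-invariance of $\mathcal{R}$ identifies the right-hand side with $\mathcal{R}(\Lambda_\bot)$. Chaining the three inequalities yields (\ref{vol-rat-ine}).

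For the equality characterisation, the ``if'' direction is immediate from $GL(n)$-invariance of $\mathcal{R}$ combined with Lemma~\ref{orl-vol-lem}. For the converse, I would assume $\varphi\neq Id$ (the case $\varphi=Id$ renders $\mathcal R\equiv 1$ and must be read trivially) and suppose, toward a contradiction, that $\mathcal{R}(\Lambda) = \mathcal{R}(\Lambda_\bot)$ while $\Lambda$ is not a $GL(n)$ image of an obtuse set. Then every inequality in the chain is an equality; in particular, conclusion (ii) of Lemma~\ref{gen-vol-rat-lem} forces $\Lambda_{e_1}$ to contain a positive multiple of $e_1$. Hence $\Lambda_\bot \uplus \Lambda_{e_1}$ contains two distinct vectors pointing in the direction of $e_1$, so $\Lambda_\bot \uplus \Lambda_{e_1} \neq \overline{\Lambda_\bot \uplus \Lambda_{e_1}}$; the equality clause of Lemma~\ref{spa-obt-lem} (which for $\varphi\neq Id$ requires $\Lambda = \overline{\Lambda}$) then gives the strict inequality $\mathcal{R}(\Lambda_\bot \uplus \Lambda_{e_1}) < \mathcal{R}(\overline{\Lambda_\bot \uplus \Lambda_{e_1}}) = \mathcal{R}(\Lambda_\bot)$, a contradiction.

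The step I expect to be most delicate is the equality analysis, since conclusion (i) of Lemma~\ref{gen-vol-rat-lem} (a merely nonempty $\Lambda_{e_1}$) would not suffice: if $\Lambda_{e_1}$ consisted exclusively of negative multiples of $e_1$, then $\Lambda_\bot \uplus \Lambda_{e_1}$ is already obtuse and no strictness can be squeezed out of Lemma~\ref{spa-obt-lem}. Invoking the sharper conclusion (ii), which guarantees a \emph{positive} multiple of $e_1$, is exactly what closes this gap; a secondary point requiring care is to verify in all subcases that $\overline{\Lambda_\bot \uplus \Lambda_{e_1}}$ is genuinely obtuse so that Lemma~\ref{orl-vol-lem} applies.
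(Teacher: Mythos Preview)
Your proposal is correct and follows essentially the same route as the paper: verify the hypotheses of Lemma~\ref{gen-vol-rat-lem} via Theorem~\ref{orl-vol-rat-thm}(ii), pass to $\overline{\Lambda_\bot\uplus\Lambda_{e_1}}$ using Lemma~\ref{spa-obt-lem}, and close with Lemma~\ref{orl-vol-lem}; for equality, invoke clause~(ii) of Lemma~\ref{gen-vol-rat-lem} to force a positive multiple of $e_1$ and derive strictness from Lemma~\ref{spa-obt-lem}. Your write-up is in fact slightly more careful than the paper's in two places---you explicitly set aside the degenerate case $\varphi=Id$ (where $\mathcal{R}\equiv 1$ and the stated equality characterization is vacuous), and you handle the general shape of $\overline{\Lambda_\bot\uplus\Lambda_{e_1}}$ uniformly rather than via an incomplete case split.
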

\begin{proof}
Let $\mathcal R(\Lambda)=\frac{V(Z^+_\varphi\Lambda)}{V(Z^+_1\Lambda)}$,
Theorem  \ref{orl-vol-rat-thm} implies that $\mathcal R(\Lambda)$ satisfies the hypotheses of Lemma \ref{gen-vol-rat-lem}. Then
\begin{align}\label{rad-orl}
  \mathcal R(\Lambda)\leq \mathcal R(\Lambda_\bot\uplus\Lambda_{e_1}),
\end{align}
where $\Lambda_{e_1}$ contains multiples of $e_1$. If $\Lambda_{e_1}$ is empty, then (\ref{vol-rat-ine}) holds. If $\Lambda_{e_1}$ contains the only positive multiples of $e_1$, then $\overline{\Lambda_\bot\uplus \Lambda_{e_1} }$ is a $GL(n)$ image of $\Lambda_\bot$. Then we have
 \begin{align}\label{b1}
 \mathcal R (\overline{\Lambda_\bot\uplus\Lambda_{e_1}})=\mathcal R (\Lambda_\bot).
  \end{align}
Moreover, by Lemma \ref{spa-obt-lem},
\begin{align}\label{b2}
 \mathcal R ({\Lambda_\bot\uplus\Lambda_{e_1}})\leq\mathcal R (\overline{\Lambda_\bot\uplus\Lambda_{e_1}}).
  \end{align}
Together with (\ref{rad-orl}), (\ref{b1}) and (\ref{b2}) we obtain
\begin{align*}
   \mathcal R(\Lambda)\leq\mathcal R (\Lambda_\bot).
\end{align*}
Note that if $\Lambda_{e_1}=\{-\mu e_1\}$, where $\mu>0$, then $\Lambda_\bot\uplus\Lambda_{e_1}=\Lambda_\bot\cup\{-\mu e_1\}$ is an obtuse set. Lemma \ref{orl-vol-lem} implies that the inequality in (\ref{vol-rat-ine}) holds.

Now we deal with the equality case of (\ref{vol-rat-ine}). We assume that $\Lambda$ is not a $GL(n)$ image of an obtuse set. Note that (\ref{rad-orl}) with equality holds only if $\Lambda_{e_1}$ contains a positive multiple of $e_1$. In this case, then $\overline{\Lambda_\bot\uplus \Lambda_{e_1} }$ is a $GL(n)$ image of $\Lambda_\bot$. Then we have
 \begin{align*}
 \mathcal R (\overline{\Lambda_\bot\uplus\Lambda_{e_1}})=\mathcal R (\Lambda_\bot).
  \end{align*}
By Lemma \ref{spa-obt-lem}, we have
\begin{align*}
 \mathcal R ({\Lambda_\bot\uplus\Lambda_{e_1}})\leq\mathcal R (\overline{\Lambda_\bot\uplus\Lambda_{e_1}}).
  \end{align*}
With equality if and only if ${\Lambda_\bot\uplus\Lambda_{e_1}}=\overline{\Lambda_\bot\uplus\Lambda_{e_1}}$, which means $\Lambda_{e_1}$ must be an negative multiples of $e_1$, then it contradicts with $\Lambda_{e_1}$ contains positive multiples of $e_1$. We prove that if the equality hold in (\ref{vol-rat-ine}), holds then $\Lambda$ is a $GL(n)$ image of an obtuse set.

On the other hand, if $\Lambda$ is a $GL(n)$ image of an obtuse set, by the $GL(n)$ invariance of the volume ratio for the Orlicz zonotopes and Lemma \ref{orl-vol-lem}, the equality of (\ref{vol-rat-ine}) holds.

Together with the above we have the equality of (\ref{vol-rat-ine}) hold if and only if $\Lambda$ is a $GL(n)$ image of an obtuse set. We complete the proof.
\end{proof}

If we take $\varphi(t)=t^p$, $p>1$, then it reduces to the following.
\begin{cor}
 Suppose $p>1$ and $\Lambda$ is a finite and spanning multiset. Then
  \begin{align*}
    \frac{V(Z^+_p\Lambda)}{V(Z^+_1\Lambda)}\leq\frac{V(Z^+_p\Lambda_\bot)}{V(Z^+_1\Lambda_\bot)},
  \end{align*}
  with equality if and only if $\Lambda$ is a $GL(n)$ image of an obtuse set.
\end{cor}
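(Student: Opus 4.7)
The plan is to parallel the argument just used for the volume product theorem, replacing the inverse product $\mathcal{P}$ by the volume ratio $\mathcal{R}(\Lambda):=V(Z^+_\varphi\Lambda)/V(Z^+_1\Lambda)$. The three ingredients needed to run the machinery are already in place: $\mathcal{R}$ is $GL(n)$-invariant, since $Z^+_\varphi$ and $Z^+_1$ are both $GL(n)$-equivariant and the determinant factor cancels in the ratio; $\mathcal{R}(\Lambda_t)$ is a convex function of $t$ along every orthogonalization shadow system $\Lambda_t$, by Theorem \ref{orl-vol-rat-thm}(\romannumeral 2); and consequently Lemma \ref{gen-vol-rat-lem} applies to $\mathcal{R}$.

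The inequality will come out in three strokes. First, I would invoke Lemma \ref{gen-vol-rat-lem} to produce a (possibly empty) multiset $\Lambda_{e_1}$ of multiples of $e_1$ with
\[
\mathcal{R}(\Lambda)\leq \mathcal{R}(\Lambda_\bot\uplus \Lambda_{e_1}).
\]
Second, I would pass to the collapsed multiset $\overline{\Lambda_\bot\uplus\Lambda_{e_1}}$ and apply the first inequality of Lemma \ref{spa-obt-lem} to get $\mathcal{R}(\Lambda_\bot\uplus\Lambda_{e_1})\leq \mathcal{R}(\overline{\Lambda_\bot\uplus\Lambda_{e_1}})$. Third, I would note that after collapsing, any positive multiple of $e_1$ in $\Lambda_{e_1}$ merges with $e_1\in\Lambda_\bot$ to a single positive multiple, while all negative multiples merge among themselves into at most one vector $-\mu e_1$; what remains is therefore a spanning obtuse set that is a $GL(n)$-image either of $\Lambda_\bot$ or of $\Lambda_\bot\cup\{-\mu e_1\}$. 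By Lemma \ref{orl-vol-lem} together with $GL(n)$-invariance of $\mathcal{R}$, this collapsed set has the same ratio as $\Lambda_\bot$. Chaining the three steps yields the inequality.

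For the equality characterization, the converse direction is immediate: if $\Lambda$ is a $GL(n)$-image of an obtuse set, then $GL(n)$-invariance of $\mathcal{R}$ and Lemma \ref{orl-vol-lem} give $\mathcal{R}(\Lambda)=\mathcal{R}(\Lambda_\bot)$. For the forward direction I would assume $\varphi\neq Id$ (the case $\varphi=Id$ being trivial, since both sides of the inequality equal $1$) and argue by contradiction. Suppose equality holds but $\Lambda$ is not a $GL(n)$-image of an obtuse set; then Lemma \ref{gen-vol-rat-lem}(\romannumeral 2) forces $\Lambda_{e_1}$ to contain a positive multiple of $e_1$. Because $e_1\in\Lambda_\bot$ already, the multiset $\Lambda_\bot\uplus\Lambda_{e_1}$ is strictly different from its collapsed version $\overline{\Lambda_\bot\uplus\Lambda_{e_1}}$, so the equality clause of Lemma \ref{spa-obt-lem} (valid for $\varphi\neq Id$) makes the second inequality in the chain strict, contradicting the assumed overall equality.

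The main subtlety will be in the equality analysis: I need to check that the only spot where the chain can become strict under the equality assumption is in the Lemma \ref{spa-obt-lem} collapse, and that it does so precisely when $\Lambda_{e_1}$ contributes a positive multiple of $e_1$. Negative multiples do not trigger strictness there (they collapse only among themselves), and, crucially, Lemma \ref{orl-vol-lem} produces a genuine equality on obtuse sets rather than a strict inequality. This is the essential contrast with the volume product argument, where Lemma \ref{vol-pro-lem} gave strict inequality whenever a negative multiple of $e_1$ was adjoined and thus forced $\Lambda_{e_1}$ to be empty at equality; here the presence of such a negative multiple is compatible with equality, which is exactly why the extremal sets are the obtuse ones rather than the bases.
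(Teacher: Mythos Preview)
Your proposal is correct and follows essentially the same approach as the paper's proof of the general theorem (of which this corollary is the specialization $\varphi(t)=t^p$): both apply Lemma~\ref{gen-vol-rat-lem} to reduce to $\Lambda_\bot\uplus\Lambda_{e_1}$, collapse via Lemma~\ref{spa-obt-lem}, and then use Lemma~\ref{orl-vol-lem} to identify the resulting obtuse ratio with $\mathcal R(\Lambda_\bot)$; the equality argument via Lemma~\ref{gen-vol-rat-lem}(\romannumeral2) and the strictness clause of Lemma~\ref{spa-obt-lem} is likewise identical. Your unified handling of the cases (collapsing first and observing the result is always obtuse) is slightly cleaner than the paper's separate treatment of positive-only and negative-only $\Lambda_{e_1}$, but the substance is the same.
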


\bibliographystyle{amsplain}

\end{document}